\newtheorem{thrm}{Theorem}[section]
\newtheorem*{MT1}{Theorem 1}
\newtheorem*{MT2}{Theorem 2}
\newtheorem*{MT3}{Theorem 3}
\newtheorem{prop}{Proposition}[section]
\newtheorem{lem}{Lemma}[section]
\newcommand{\maps}{\rightarrow}
\newcommand{\nats}{\mathbb{N}_0}
\newcommand{\ind}{\mathbbm{1}}
\newcommand{\ev}{\mathbb{E}}
\newcommand{\ball}{B_{\rho}}
\providecommand{\abs}[1]{\lvert \, #1 \, \rvert}
\providecommand{\Abs}[1]{\biggl\lvert \, #1 \, \biggr \rvert}
\providecommand{\norm}[1]{\lVert \, #1 \, \rVert}
\providecommand{\floor}[1]{\left\lfloor \, #1 \, \right\rfloor}
\providecommand{\roof}[1]{\left\lceil \, #1 \, \right\rceil}
\newcommand{\bea}[1]{\begin{eqnarray}\label{#1}}
\newcommand{\eea}{\end{eqnarray}}
\title[Limiting distribution for balls]{Limiting distribution and error terms for the number of visits to balls
 in non-uniformly hyperbolic dynamical systems}
 \date{\today}
\begin{document}
 \maketitle
\authors{N Haydn\footnote{Department of Mathematics, University of Southern California,
Los Angeles, 90089-2532. E-mail: {\tt \email{nhaydn@usc.edu}}.},
 K Wasilewska\footnote{Department of Mathematics, Harvard-Westlake School,
Studio City, CA 91604. E-mail: {\tt \email{kwilliams@hw.com}}.}}


\begin{abstract}
We show that for systems that allow a Young tower construction with polynomially
decaying correlations the return times to metric balls are in the limit Poisson distributed.
We also provide error terms which are powers of logarithm of the radius. In order
to get those uniform rates of convergence the balls centres have to avoid a set whose
size is estimated to be of similar order. This result can be applied to non-uniformly
hyperbolic maps and to any invariant measure that satisfies a weak regularity
condition. In particular it shows that the return times to balls is Poissonian
for SRB measures on attractors.
\end{abstract}

\section{Introduction}

Poincar\'e's recurrence theorem~\cite{P99} established that for measure preserving maps points
return to neighbourhoods arbitrarily often almost surely. The return time for the first return
was quantified by Kac~\cite{K47} in 1947 and since then there have been efforts to describe the
return statistics for shrinking neighbourhoods. One looks at returns for orbit segments
whose length is given by a parameter $t$ scaled by the size of the target set. The scaling
factor is suggested by Kac's theorem. In~\cite{L02,KL} it was shown that for ergodic
maps one can achieve any limiting statistics if one chooses the shrinking target sets
suitably.
For a generating partition the natural neighbourhoods are cylinder sets and for those
the limiting distributions for entry and return times were shown under various mixing conditions
 to be exponential with parameter $1$
(see for instance~\cite{Hirata1,GS97,C01, Ab2}).
However Kupsa constructed an example  which has a limiting hitting time distribution 
almost everywhere and which is not the exponential distribution with parameter $1$. 
For multiple returns it has been established under various mixing conditions
that the limiting distribution is Poissonian almost surely. The first such result is due
to Doeblin~\cite{Doe} for the Gauss map for which he showed that at the origin multiple
returns to cylinder sets are in the limit Poisson distributed. Pitskel~\cite{Pit} (see also~\cite{Den})
 used the
moment method to prove that the return times are in the limit Poissonian for equilibrium
states for Axiom A maps using Markov partitions. Similar results have been shown
for some non-uniformly expanding maps~\cite{HSV,Dol}, for rational maps~\cite{H00}
and for $\phi$-mixing maps~\cite{AV1,Ab3}. Using the Chen-Stein method, it was
also shown for toral automorphism (by way of harmonic analysis)~\cite{DGS}
 and for unions of cylinders and $\phi$-mixing measures in~\cite{HP10}. 
More results arementioned in the review~\cite{H13}.

For the return times to metric balls $B_\rho$ on manifolds~\cite{CC13} proves the limiting distribution
to be Poissonian for the SRB measure on a one-dimensional attractor which allows the
construction of a Young tower~\cite{LSY98,LSY99} with exponentially decaying
correlations. The speed of convergence turns out to be a (positive) power of the radius $\rho$
of the target ball.
Here we prove a similar result in the case when the correlations decay
at a polynomial rate. Also the attractor is not required to be one-dimensional.
In fact, the result here applies to any invariant measure that can be constructed
using the tower construction whether it be absolutely continuous or not.
The speed of convergence in this case is a negative power of $\abs{\log\rho}$.
Let us note that recently P\`ene and Saussol~\cite{PS} have obtain the limiting
Poisson distribution for return times for the SRB measure assuming some
geometric regularity.

 The results of this paper are taken from~\cite{Was} and are organised as follows:
 In Section~\ref{results} we state the results where the exact conditions are given
in later sections (indicated within the theorems). In Section~\ref{YoungTower}
we give the description of Young's tower construction which is central to the result.
In Section~\ref{ProofTheorem} we prove the main result Theorem~1
which proceeds in several steps which are outlined at the beginning of the section.
In Section~\ref{VeryShortReturns} we consider the case of a diffeomorphism on
 a manifold and show that very short returns i.e.\ those that are of order
 $\abs{\log\rho}$ (where $\rho$ is the radius of the target ball) constitute a
 very small portion of the manifold whose measure can be bounded.
 We use and adapt an argument from~\cite{CC13} Lemma~4.1.

\section{Results}\label{results}
Let $(M, T)$ be a dynamical system on the compact metric space $M$. For a positive parameter
$\mathfrak{a}$ define the set
\begin{equation} \label{defM_rho,J}
\mathcal{V}_\rho(\mathfrak{a}) = \{\mathsf{x} \in M: \ball(\mathsf{x}) \cap T^{n}\ball(\mathsf{x}) \ne \varnothing \text{ for some } 1 \leq n < \mathfrak{a}\abs{\log\rho}\},
\end{equation}
where $\rho>0$. The set $\mathcal{V}_\rho$ represents the points within $M$ with very short return times.

For a ball $\ball(\mathsf{x})\subset M$ we define the counting function
\vspace{-0.2cm}
\begin{equation*}
S^{t}_{\rho, \mathsf{x}}(x)=\sum_{n=0}^{\floor{t/\mu(\ball(\mathsf{x}))}-1} {\ind_{\ball(\mathsf{x})}} \circ T^n(x).
\end{equation*}
which tracks the number of visits a trajectory of the point $x \in M$ makes to the ball $\ball(\mathsf{x})$
on an orbit segment of length $N=\floor{t/\mu(\ball(\mathsf{x}))}$.
(We often omit the sub- and superscripts and simply use $S(x)$.)

Let us now state our main results. Definitions and background material  are in Section~\ref{YoungTower}.

\begin{MT1}\label{MT1}
Let $(M, T)$ be a dynamical system which can be modeled by a Young tower. Suppose that the tail of the tower's return time function decays polynomially with degree $\lambda > 4$. Let $\mu$ be the SRB measure admitted by the system and let $\varsigma$ be its dimension. Assume that $\mu$ is geometrically and $\xi$-regular. Exact assumptions are in Section \ref{ass_T1}.

Then there exist constants $\kappa, \kappa' \in (0, \frac{\lambda-4}{2})$ and $\mathfrak{a}, C, C'>0$ such that for $\rho$ sufficiently small there exists a set $\mathcal{X}_\rho \subset M$ with $\mu(\mathcal{X}_\rho) \leq C' \abs{\log \rho}^ {- \kappa'}$ such that for all $\rho$-balls with centers
$\mathsf{x} \notin \mathcal{X}_\rho \cup \mathcal{V}_\rho(\mathfrak{a})$ we have
\begin{equation}\label{poissonian}
\Abs{\mathbb{P}(S=k) - e^{-t} \frac{t^k}{k!}} \; \leq \; C \, \abs{\log \rho}^ {- \kappa} \qquad \text{ for all } k \in \nats.
\end{equation}
\end{MT1}

\noindent Theorem~1 establishes the limiting statistics of returns outside the set $\mathcal{V}_\rho$
of very short returns. If $M$ is a manifold and the map $T$ sufficiently regular, one can say
something about the size of $\mathcal{V}_\rho$. This is done in the following result
where we obtain for smooth maps on manifolds the following
limiting result that at the same time controls the size of the forbidden set.

\begin{MT2}
Let $(M, T)$ be a dynamical system satisfying the assumptions of Theorem~1 and where $T$
is a $C^2$-diffeomorphism and $\lambda>9$. Let $\mu$ be the invariant measure. For
$\mathsf{x} \not\in \mathcal{X}_\rho \cup \mathcal{V}_\rho(\mathfrak{a})$ the function $S^{t}_{\rho, \mathsf{x}}$ counting the number of visits to the ball $\ball(\mathsf{x})$ satisfies~\eqref{poissonian} for $\rho$ sufficiently small where the constants $\kappa \in (0, \frac{\lambda-4}{2})$ and $C>0$ are independent of $\rho$ and $\mathsf{x}$.

Further there exists  $\kappa' \in (0, \frac{\lambda-9}{4}]$ and $C''>0$ so that
\begin{equation*}
\mu(\mathcal{X}_\rho \cup \mathcal{V}_\rho(\mathfrak{a})) \leq C'' \, \abs{\log \rho}^ {- \kappa'}\\[0.2cm]
\end{equation*}
for all $\rho$ small enough
($\mathfrak{a} = [\, 4 \, (\norm{DT}_{\mathscr{L}^\infty} + \norm{DT^{-1}}_{\mathscr{L}^\infty})]^{-1}$).
\end{MT2}

\noindent Note that as $\rho$ tends to zero so does the error of approximation and the measure of the excluded set. Therefore in the limit the function $S$ is Poisson distributed on a full measure set.

If the assumption ``$\mu$ geometrically and $\xi$-regular" is replaced with ``$\mu$ absolutely
continuous with respect to Lebesgue measure" (the latter condition implies the former) then
we obtain the following result which generalises the main theorem of~\cite{CC13} from
exponentially decaying correlation to polynomially decaying correlations. Moreover, here
we don't require the attractor to have one-dimensional unstable manifolds.

\begin{MT3}
Let $(M, T)$ be a dynamical system on a compact manifold $M$ where $T: M \maps M$ is a $C^2$ diffeomorphism with attractor $\mathscr{A}$. Suppose the system can be modeled by a Young tower whose return time function decays polynomially with degree $\lambda > 9$. Let $\mu$ be the SRB measure admitted by the system and let $\varsigma$ be its dimension. Exact assumptions are in
Section~\ref{SRBmeasure}.

For every  $\kappa < \frac{\lambda-4}{2}$ there exists $\hat{C}>0$, and sets
$\mathcal{Z}_\rho \subset M$ with
$$
\mu(\mathcal{Z}_\rho) \leq \hat{C} \abs{\log \rho}^{- \frac{\lambda-9}4}
$$
 so that for
 $\mathsf{x} \notin \mathcal{Z}_\rho$, the function $S^{t}_{\rho, \mathsf{x}}$ counting the number of visits to the ball $\ball(\mathsf{x})$ satisfies
\begin{equation*}
\Abs{\mathbb{P}(S=k) - e^{-t} \frac{t^k}{k!}} \; \leq \; \hat{C} \, \abs{\log \rho}^ {-\kappa} \qquad \text{ for all } k \in \nats.
\end{equation*}
\end{MT3}

\vspace{3mm}

\noindent Throughout the paper $C_1, C_0, \hdots$ and $\alpha, \beta, \hdots$ denote global constants
 while $c_0, c_1, \hdots$ are locally defined constants.


\section{Young Tower} \label{YoungTower}

We shall use the tower method which was developed by L-S Young in~\cite{LSY98, LSY99} to
construct invariant measures and to obtain decay rates for correlations.

\vspace{3mm}

\noindent {\bf (I) Partition, foliation and return time function:}
The base $\Lambda \subset M$ is assumed to have a suitable `reference measure' $\hat{m}$. There exists a collection of pairwise disjoint subsets $\{\Lambda_i\}_{i \in N}$ so that the following are satisfied:
\begin{itemize}
\item[a)] Modulo sets of $\hat{m}$-measure zero $\Lambda = \bigsqcup_i \Lambda_i$.
\item[b)] The base $\Lambda$ is equipped with the product structure and each of the $\Lambda_i$ is assumed to be a rectangle (see below) and satisfy the Markov property under the return transform (see~\cite{LSY98}).
\item[c)] There exists a return time function $R: \Lambda \maps \mathbb{N}$ such that $T^{R}\Lambda_i = \Lambda$.
\end{itemize}

Now, let $\gamma^u(x)$ be the local unstable leaf through $x$ and $\gamma^s(y)$ be the local stable leaf through $y$. $\Lambda_i$ being a rectangle means that if $x,y\in\Lambda_i$ then there is a unique intersection $z=\gamma^u(x)\cap\gamma^s(y)$ which also lies in $\Lambda_i$. 
The map $T$ contracts along the stable leaves and similarly $T^{-1}$ contracts along the unstable 
leaves (see Assumption~(A1)).
Note that, if $\gamma^u, \hat\gamma^u$ are two unstable leaves then the homotopy map $\Theta:\gamma^u\cap\Lambda\to \hat\gamma^u\cap\Lambda$ is defined by $\Theta(x)=\hat\gamma^u\cap\gamma^s(x)$ for $x\in\gamma^u\cap\Lambda$.

The return time function $R$ is constant on each of the subsets $\Lambda_i$, meaning that for each $i$ there is an $R_i \in \mathbb{Z}^{+}$ such that $R|_{\Lambda_i} = R_i$ and in fact $T^{R_i}\Lambda_i = \Lambda$. Without loss of generality we will assume that the greatest common divisor of all of the $R_i$ is equal to one. The function $R$ is assumed to be integrable on each unstable leaf $\gamma^u$ with respect to a `reference measure' $\hat{m}$. That is
$$
\sum_i R_i \hat{m}_{\gamma^u}(\Lambda_i) = \int_{\Lambda} R \,d\hat{m}_{\gamma^u} < \infty
$$
for all local unstable leaves $\gamma^u$,
where $\hat{m}_{\gamma^u}$ is the conditional measure on $\gamma^u$.
Precise assumptions are formulated in Assumptions~(A1) and~(A2).

Since any point $x$ from $\Lambda_i \subset \Lambda$ will return to $\Lambda$ after $R_i$ iterations we can define the return transform $\hat{T}: \Lambda \maps \Lambda$ piecewise as follows:
$$
\hat{T}x := T^{R_i}x \quad \text{for } x \in \Lambda_i.
$$
By assumption $\hat{T} \Lambda_i = \Lambda$ with $\hat{T}|_{\Lambda_i}$ one to one and onto.

We extend the return time function $\hat{T}$ to all of $M$ in the following way.
For a point $x$ let $j\ge0$ be the smallest integer so that $T^jx\in\Lambda$. Then 
we put $\hat{T}(x)=T^j(x)$.

\vspace{3mm}
\noindent {\bf (II) Separation Time:}
Based on the flight time $R$ and the partition of the base, we define the {\em separation time}
\begin{equation*}
s(x,y) = \min \, \{k \geq 0 \, : \; \hat{T}^k x \text{ and } \hat{T}^k y \text{ lie in distinct } \Lambda_i\},
\end{equation*}
so that for $x,y \in \Lambda_i$, $s(x,y) = 1+ s(\hat{T} x, \hat{T} y)$ and in particular $s(x,y) \geq 1$.
If  $F\subset\Lambda$ then we also put
$$
s(F) =\min_{x,y\in F}s(x,y).
$$
We extend the separation function to points outside $\Lambda$ as follows. If for points $x,y$ there
exists an integer $k\ge0$ (smallest) so that $T^kx,T^ky\in\Lambda_i$ for some $i$, then
$s(x,y)=s(T^kx,T^ky)$. 

\vspace{3mm}

\noindent {\bf (III) The Jacobian:}
Even though the original system $(M, \mathscr{B}, T, \mu)$ is not necessarily differentiable in the
ordinary sense, one uses the
Radon-Nikodym derivative of $T$ with respect to the `reference measure' $\hat{m}$
(following~\cite{LSY98}, p.~596). The derivative exists and is well defined because every $T^{R_i}|_{\Lambda_i}$ and its inverse are non-singular with respect to the conditional measure $\hat{m}_{\gamma^u}$ on
unstable leaves $\gamma^u$. Let
$$
JT = \frac{d(T^{-1}_{*}\hat{m}_{\gamma^u})}{d\hat{m}_{\gamma^u}}.
$$
The requirements on $JT$ will be spelled out in Assumption~(A1).

\vspace{3mm}

\noindent {\bf (IV) The SRB measure:}
According to~\cite{LSY98,LSY99} $(M, T)$ has a generalised SRB measure $\mu$ given by
$$
\mu(S) = \sum_{i=1}^{\infty} \sum_{j=0}^{R_i-1} m(T^{-j}S \cap \Lambda_i) \quad \text{for sets } S \subset M.
$$
where $m$ is the generalised SRB measures for the uniformly expanding system system
$(\Lambda, \hat{T})$.

If we denote by $m_{\gamma^u}$ the conditional measure on unstable leaves $\gamma^u$,
then $dm=dm_{\gamma^u}d\nu(\gamma^u)$ where $d\nu$ is the transversal measure.
We will refer to the portions of the tower above each $\Lambda_i$ as {\em beams} and assume that for $n \in \mathbb{N}$, there are only finitely many $i$'s for which $R_i = n$, i.e.\ for every $n$ there are only finitely many beams with that height.

The map $T$ maps each level bijectively onto the next and the last level is bijectively mapped onto all of $\Lambda$. (Note that $R_i$ may not be the first time $\Lambda_i$ returns to $\Lambda$.) In this way $m$ can be extended
to the entire tower by
$$
m(T^j F) = m(F) \quad \text{for} \quad 0 \leq j \leq R_i-1
$$
 for any $F \subset \Lambda_i$


\section{Proof of Theorem~1}\label{ProofTheorem}

In this section we prove Theorem 1. We begin by stating the precise assumptions necessary for the result and derive some of the consequences that follow with minimal work. In Section~\ref{approx_ball} we introduce
cylinder sets. In order to approximate the metric balls we will restrict to those cylinder set that
have that have only short returns. We then provide several results on the behaviour of cylinder sets
 under suitable applications of the return map $\hat{T}$. The succeeding Section~\ref{removed_sets}
 contains estimates concerning the portions of the space $M$ which have to be omitted in order to obtain
 good asymptotic behaviour for the long returns. This is the `forbidden set'.
Section~\ref{set_up_T1} utilizes the Poisson approximation theorem from Section~\ref{poisson} to
establish a splitting of the error term to the Poisson distribution into two parts $\mathcal{R}_1$ and
$\mathcal{R}_2$. The remainder of the section is devoted to estimating these error terms one by one. In Section~\ref{est_R1_section} we estimate the error $\mathcal{R}_1$ which comes from long term interactions and uses decay of correlations. Sections~\ref{R2.fixed.n} and~\ref{est_R2} are devoted to bounding the term $\mathcal{R}_2$
which comes from short time (but not very short time) interactions. This is the place where
the Young tower construction comes to play and where we have to use
approximations by cylinder sets in order to make careful distinctions between short returns and
long returns to balance out different contributions to the error term. In Section~\ref{optimization} the different error terms are brought together and the various parameters are optimised.

\subsection{Assumptions} \label{ass_T1}

Let $(M, T)$ be a dynamical system equipped with a metric $d$ and let $\mu$ be the SRB measure associated to the system (whose existence follows by~\cite{LSY98,LSY99} from Assumptions~(A1) and~(A2)). We will require the following:

\vspace{0.5cm}
\noindent (A1) {\em Regularity of the Jacobian and the metric on the leaves} \\ There exists a constant $C_0>0$ and $\alpha \in (0,1)$ such that for any $x, y$  in $\Lambda$ with $s(x,y) \geq 1$
\begin{eqnarray*}
(a) \qquad &&\Abs{\log \, \frac{J\hat{T}x}{J\hat{T}y}} \, \leq C_0 \, \alpha^{s(\hat{T} x,\hat{T} y)}
\quad\mbox{if } \gamma^u(x)=\gamma^u(y); \\[0.3cm]
(b) \qquad &&d(\hat{T}^k x, \hat{T}^k y) \leq C_0 \, \alpha^{s(x,y)-k} \quad \text{for } 0 \leq k < s(x,y)
\quad\mbox{ if } \gamma^u(x)=\gamma^u(y) ;\\[0.3cm]
(c)\qquad && \log\prod_{k=n}^\infty\frac{J\hat{T}(\hat{T}^kx)}{J\hat{T}(\hat{T}^ky)}\le C_0\alpha^n
\quad\mbox{if } \gamma^s(x)=\gamma^s(y); \\
(d)\qquad && \frac{d\Theta^{-1}\hat{m}_{\Theta\gamma^s}}{d\hat{m}_{\gamma^s}}(x)
=\log\prod_{k=0}^\infty\frac{J\hat{T}(\hat{T}^kx)}{J\hat{T}(\hat{T}^k\Theta x)};\\[0.1cm]
(e)\qquad &&d(\hat{T}^nx,\hat{T}^ny)\le C_0\alpha^n\quad \mbox{for } n\in\mathbb{N}\quad
\mbox{if } \gamma^s(x)=\gamma^s(y).
\end{eqnarray*}

\vspace{0.3cm}
\noindent (A2) {\em Polynomial Decay of the Tail} \\
There exist constants $C_1$ and $\lambda>4$ such that
\begin{equation} \label{tailDecay}
\hat{m}_{\gamma^u}(R>k) \leq C_1 \, k^{-\lambda}
\end{equation}
for every unstable leaf $\gamma^u$. With regard to Assumption~(A1)(d) this condition is satisfied
for all unstable leaves if it can be verified for a one $\gamma^u$.

\vspace{0.3cm}
\noindent (A3) {\em Additional assumption on $\lambda$} \\
Let $\varsigma$ be the dimension of the measure $m_{\gamma^u}$ and 
$\hat\varsigma$ the dimension of $\mu$. We will require that
\begin{equation} \label{A2}
\xi={\varsigma}(\lambda - 1)-\hat\varsigma > 1.
\end{equation}

\vspace{0.3cm}
\noindent (A4) {\em Regularity of the invariant measure}\\
Let $\xi={\varsigma}(\lambda - 1)-\hat\varsigma>1$ by (A3) and suppose that the positive constant $\varsigma' < \varsigma$ is fixed. There exist a set $\mathcal{E}_{\rho}\subset M$ satisfying
$\mu(\mathcal{E}_{\rho})\le\abs{\log\rho}^{-\frac{\lambda-4}3}$ so that for $\rho$ small enough:
\begin{enumerate}
\item[(a)] ($\xi$-regularity) There exists $w_0 \in (1, \xi)$ and $a>0$ so that
$$
\frac{\mu(B_{\rho+\rho^w}(\mathsf{x}) \setminus B_{\rho-\rho^w}(\mathsf{x}))}{\mu(\ball(\mathsf{x}))}
\leq \frac{1}{g(w)\abs{\log \rho}^a}
$$
for all $\mathsf{x} \not\in\mathcal{E}_{\rho}$ and $w>w_0$ where the function $g(w)$
is so that $\sum_n^\infty g(n^\beta)^{-1}<\infty$ for some $\beta<1-\frac3{\lambda-1}$.
 We say  $\mu$ is {\em $\xi$-regular}.
\item[(b)] (geometric regularity) There exists a $\varsigma'<\varsigma$, 
$\hat\varsigma''>\hat\varsigma$ satisfying~\eqref{A2} and $C_2 > 0$ such that 
$$
m_{\gamma^u}(\ball) \leq C_2 \, \rho^{\varsigma'},
\qquad  \mu(\ball)\ge C_2\rho^{\hat\varsigma''}
$$
for all $\ball(\mathsf{x}) \subset M$ for which  $\mathsf{x} \not\in \mathcal{E}_{\rho}$
and all unstable leaves $\gamma^u$.
\end{enumerate}

\subsection{Immediate Consequences of the Assumptions}

In this section we list some basic results which will be needed in the proof of the main results.

\begin{lem}[Distortion]\label{Distortion}
There exists a constant $C_3>1$ such that\\
(i) for any $x$ and $y$ in $\Lambda$, $\gamma^u(x)=\gamma^u(y)$  with separation time 
$s(x,y) \geq q$:
\begin{equation} \label{distortionUse}
J\hat{T}^q x \in J\hat{T}^q y \; \biggl[\frac{1}{C_3}, C_3 \biggr].
\end{equation}
(ii) For any $F \subset F' \subset \Lambda_i\cap\gamma^y$ (for some $i$) and for any $q \leq s(F')=\inf_{x,x'\in F'}s(x,x')$
\begin{equation} \label{pushforwards}
\frac{1}{C_3} \, \frac{m_{\hat\gamma^u}(\hat{T}^q F)}{m_{\hat\gamma^u}(\hat{T}^q F')} 
\leq \frac{m_{\gamma^u}(F)}{m_{\gamma^u}(F')}
\leq C_3\, \frac{m_{\hat\gamma^u}(\hat{T}^q F)}{m_{\hat\gamma^u}(\hat{T}^q F')},
\end{equation}
where $\hat\gamma^u=\gamma^u(\hat{T}^q(F))$.
\end{lem}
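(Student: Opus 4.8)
The plan is to derive both statements from the bounded-distortion estimate in Assumption~(A1)(a), which controls $\log(J\hat{T}x/J\hat{T}y)$ by $C_0\alpha^{s(\hat{T}x,\hat{T}y)}$ for points on the same unstable leaf. For part~(i), write the Jacobian of the $q$-fold iterate via the chain rule as $J\hat{T}^q x = \prod_{j=0}^{q-1} J\hat{T}(\hat{T}^j x)$, and compare term by term with the corresponding product for $y$. Since $\gamma^u(x)=\gamma^u(y)$ and $s(x,y)\ge q$, we have $\hat{T}^j x$ and $\hat{T}^j y$ on a common unstable leaf with $s(\hat{T}^{j}x,\hat{T}^{j}y) = s(x,y)-j$ for $0\le j<q$, hence $s(\hat{T}^{j+1}x,\hat{T}^{j+1}y)=s(x,y)-j-1\ge q-j-1$. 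Applying~(A1)(a) to each pair gives
\begin{equation*}
\Abs{\log \frac{J\hat{T}^q x}{J\hat{T}^q y}} \;\le\; \sum_{j=0}^{q-1} C_0\,\alpha^{s(\hat{T}^{j+1}x,\hat{T}^{j+1}y)} \;\le\; C_0 \sum_{j=0}^{q-1}\alpha^{q-1-j} \;\le\; \frac{C_0}{1-\alpha},
\end{equation*}
so setting $C_3 = \exp\!\big(C_0/(1-\alpha)\big) > 1$ yields~\eqref{distortionUse}. The only subtlety is bookkeeping the index shift in the separation time, which is forced by the cocycle relation $s(x,y)=1+s(\hat{T}x,\hat{T}y)$ recorded in part~(II).

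For part~(ii), the idea is that $\hat{T}^q$ restricted to $F'$ has Jacobian (with respect to the conditional measures $m_{\gamma^u}$ and $m_{\hat\gamma^u}$) that varies by at most a factor $C_3$ across $F'$, by part~(i) applied with the hypothesis $q\le s(F')$. Concretely, using the change-of-variables formula $m_{\hat\gamma^u}(\hat{T}^q F) = \int_F J\hat{T}^q\,dm_{\gamma^u}$ and the analogous identity for $F'$, bound the integrand above and below by its sup and inf over $F'$; part~(i) shows $\sup_{F'} J\hat{T}^q \le C_3 \inf_{F'} J\hat{T}^q$. Dividing the two relations,
\begin{equation*}
\frac{m_{\hat\gamma^u}(\hat{T}^q F)}{m_{\hat\gamma^u}(\hat{T}^q F')} \;=\; \frac{\int_F J\hat{T}^q\,dm_{\gamma^u}}{\int_{F'} J\hat{T}^q\,dm_{\gamma^u}} \;\in\; \Bigl[\frac{1}{C_3},\,C_3\Bigr]\cdot\frac{m_{\gamma^u}(F)}{m_{\gamma^u}(F')},
\end{equation*}
which rearranges to~\eqref{pushforwards}. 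One should note that $m_{\gamma^u}$ here is the SRB conditional measure rather than the reference measure $\hat m_{\gamma^u}$, so strictly the Jacobian $J\hat{T}$ with respect to $m_{\gamma^u}$ must be used; its bounded distortion follows from~(A1)(a) together with the standard fact that the density $dm_{\gamma^u}/d\hat m_{\gamma^u}$ is itself controlled by an infinite product of Jacobian ratios that converges geometrically (as in~(A1)(c)--(d)), at worst enlarging the constant $C_3$.

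The main obstacle is not conceptual but the careful verification that the separation-time hypotheses propagate correctly under iteration — namely that $q\le s(F')$ guarantees $s(\hat{T}^j x,\hat{T}^j y)\ge q-j$ for every pair in $F'$ and all $0\le j\le q$, so that part~(i) is legitimately applicable at each step — and the clean handling of the passage between the reference measure and the SRB conditional measure so that a single uniform constant $C_3$ works for both parts. Everything else is a routine telescoping of the geometric bound in~(A1)(a).
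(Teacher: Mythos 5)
Your proof is correct and follows essentially the same route as the paper's: chain-rule telescoping of $\log(J\hat T^q x/J\hat T^q y)$ against~(A1)(a) for part~(i), and a change-of-variables argument controlled by part~(i) for part~(ii) (the paper phrases the latter via the Mean Value Theorem, while you use $\sup/\inf$ bounds on the Jacobian over $F'$ -- these are interchangeable). One point worth flagging in your favor: you explicitly note that $J\hat T$ is defined as the Radon--Nikodym derivative with respect to the reference measure $\hat m_{\gamma^u}$ while the lemma is stated for the SRB conditional $m_{\gamma^u}$, and you correctly observe that the density $dm_{\gamma^u}/d\hat m_{\gamma^u}$ is uniformly bounded (via~(A1)(c)--(d)) so that the distortion constant absorbs the change of measure. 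The paper's proof applies the MVT silently as if the two measures were interchangeable, so your version is the more careful of the two on this technical point.
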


\begin{proof}
(i) Let $x,y \in \Lambda$, , $\gamma^u(x)=\gamma^u(y)$  and let $q \geq 1$ be an integer
 less than or equal to $s(x,y)$. 
Then by the chain rule and (A3)(a)
$$
\Abs{\log \, \frac{J\hat{T}^q x}{J\hat{T}^q y}} \leq \sum_{j=0}^{q-1} \Abs{\log \, \frac{J\hat{T}(\hat{T}^j x)}{J\hat{T}(\hat{T}^j y)}}
\leq \sum_{j=0}^{q-1} C_0 \alpha^{s(\hat{T}(\hat{T}^j x),\hat{T}(\hat{T}^j x))} = C_0 \sum_{j=0}^{q-1} \alpha^{q-(j+1)} \leq \frac{C_0}{1-\alpha}.
$$
as $\alpha^{s(\hat{T}^{j+1} x,\hat{T}^{j+1} y)} \leq \alpha^{q-(j+1)}$
which implies the statement~(i) with $C_3= e^{\frac{C_0}{1-\alpha}}$.

\vspace{3mm}

\noindent (ii) By the Mean Value Theorem
$$
\frac{m_{\gamma^u}(F)}{m_{\gamma^u}(F')} 
= \frac{J(\hat{T}^q x)^{-1} m_{\hat\gamma^u}(\hat{T}^q F)}
{J(\hat{T}^q x')^{-1} m_{\hat\gamma^u}(\hat{T}^q F')}
= \frac{J(\hat{T}^q x')}{J(\hat{T}^q x)} \; 
\frac{m_{\hat\gamma^u}(\hat{T}^q F)}{m_{\hat\gamma^u}(\hat{T}^q F')}
$$
for some  $x\in F$ and $x'\in F'$.
The result follows now from part~(i).
\end{proof}


According to~\cite{LSY99}(Theorem~3) the decay of correlations is polynomial:
Let $\phi$ be a Lipschitz continuous function and $\psi\in\mathscr{L}^\infty$  constant on
 local stable leaves. Then one has
\begin{equation} \label{corrDecay}
\Abs{\int_M \phi \; \psi \circ T^n \, d\mu - \int_M \phi \, d\mu \int_M \psi \, d\mu} \; \; \leq \; \varphi_n \norm{\phi}_{Lip} \norm{\psi}_{\mathscr{L}^\infty}
\end{equation}
where the decay function $\varphi_n=\mathcal{O}(1) \sum_{k>n} m(R>k)\leq C_4 \, n^{-\lambda+1}$, for
some $C_4>0$ where $\lambda>0$  is the tail decay exponent from assumption~(A2).
Note that in general for functions $\psi$ which are not constant on local stable leaves, the supremum norm on the RHS of~\eqref{corrDecay} has to be replaced by the Lipschitz norm (see~\cite{LSY98}).\\

\vspace{3mm}

\noindent We will also need the following function of $s \in \mathbb{R}^+$:
\begin{equation*}
\Omega(s) := \sqrt{\sum_{i: R_i> s} R_i \; m(\Lambda_i)}.
\end{equation*}
Since the return time $R$ is integrable $\Omega(s) \maps 0$ as $s \maps \infty$.

\begin{lem}[Decay of $\Omega$] \label{omegaEst}
There exists a constant $C_5$ such that for $s \geq 4$
\begin{equation*}
\Omega(s) \leq C_5 s^{- \theta}
\end{equation*}
where $\theta = (\lambda -1)/2$.
\end{lem}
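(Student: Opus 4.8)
The plan is to estimate the tail sum $\sum_{i:R_i>s}R_i\,m(\Lambda_i)$ directly from the polynomial tail bound~\eqref{tailDecay} and then take square roots. First I would rewrite the sum over beams as a sum over heights: since for each $n$ there are only finitely many $i$ with $R_i=n$, set $p_n=\sum_{i:R_i=n}m(\Lambda_i)=m(R=n)$, so that
\begin{equation*}
\Omega(s)^2=\sum_{n>s}n\,p_n.
\end{equation*}
The idea is to pass from the ``density'' $p_n$ to the tail $m(R>n)$ by a summation-by-parts (Abel) manoeuvre: writing $n=\sum_{k=0}^{n-1}1$ and interchanging the order of summation, one gets $\sum_{n>s}n\,p_n=\sum_{k\ge0}m(R>\max(k,s))$, which after splitting the range $k<s$ and $k\ge s$ becomes $\lfloor s\rfloor\,m(R>s)+\sum_{k\ge s}m(R>k)$ up to harmless integer-part adjustments. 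Both pieces are then controlled by~\eqref{tailDecay}.

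Next I would invoke Assumption~(A2): $\hat m_{\gamma^u}(R>k)\le C_1k^{-\lambda}$ on every unstable leaf, and hence, integrating over the transversal measure (the total mass being finite and normalised), $m(R>k)\le C_1k^{-\lambda}$ as well. Plugging this in,
\begin{equation*}
\Omega(s)^2\le C_1\,s\cdot s^{-\lambda}+C_1\sum_{k\ge s}k^{-\lambda}\le C_1 s^{1-\lambda}+C_1\,c\,s^{1-\lambda},
\end{equation*}
where the last step uses $\sum_{k\ge s}k^{-\lambda}\le c\,s^{1-\lambda}$ for $s\ge1$, valid because $\lambda>4>1$ (compare the integral $\int_{s-1}^\infty x^{-\lambda}\,dx$). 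Therefore $\Omega(s)^2\le C_5^2\,s^{1-\lambda}$ for a suitable $C_5$ and all $s\ge4$ (the threshold $s\ge4$ just makes the integer-part corrections above clean and keeps all constants absolute). Taking square roots gives $\Omega(s)\le C_5\,s^{-(\lambda-1)/2}=C_5\,s^{-\theta}$ with $\theta=(\lambda-1)/2$, as claimed.

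There is no real obstacle here; the only point requiring a little care is the bookkeeping in the Abel summation — making sure the weight $n$ attached to $p_n$ is correctly redistributed so that the whole expression is genuinely dominated by $s\,m(R>s)$ plus the tail $\sum_{k\ge s}m(R>k)$, rather than by something larger. One must also note that $\Omega(s)^2$ is a finite quantity to begin with, which is exactly the integrability of $R$ recorded after the definition of $\Omega$, so all manipulations are on absolutely convergent series. Everything else is the elementary comparison of $\sum_{k\ge s}k^{-\lambda}$ with $s^{1-\lambda}$.
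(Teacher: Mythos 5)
Your proposal is correct and takes essentially the same route as the paper: the paper states the bound $\Omega(s)^2\le s\,m(R>s)+\sum_{k\ge s}m(R>k)$ without derivation and then applies the tail bound from (A2), whereas you derive that same decomposition explicitly by Abel summation before applying the identical tail estimate. The extra bookkeeping you supply (Abel/summation by parts, the remark that $m(R>k)\le C_1k^{-\lambda}$ follows by integrating (A2) over the transversal measure) fills in steps the paper leaves implicit but does not change the argument.
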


\begin{proof}
By definition
$$
\Omega(s)^2 =\sum_{i: R_i > s} R_i \, m(\Lambda_i)
 \leq \sum_{k=s}^{\infty} m(R>k) + s \, m(R>s)
 \leq \sum_{k=s}^{\infty} C_1 k^{-\lambda} + s C_1 s^{-\lambda}
 \leq  c_1 s^{2\theta}
$$
using the tail decay, where $c_1<\infty$. We complete the proof by setting $C_5 = \sqrt{c_1}$.
\end{proof}

\subsection{Cylinder sets}\label{approx_ball}

 Let $s$ be a given integer. We shall separate the beams with return times greater than $s$ and also
 portions of the base that visit those beams during the ``flight''.
 The beams with heights less than $s$ will be referred to as ``short'' and constitute the principal part.
 The ``tall'' beams (i.e.\ when the returns are $>s$) will be treated like error terms and contribute
 to the ``forbidden'' set $\mathcal{X}_\rho$ whose size is small and estimated in Section~\ref{removed_sets}.
Let us introduce several quantities that will be needed to deal with the long return times.\\
{\bf (I)} For indices  $(i_0,\dots,i_l)\in \mathbb{N}^{l+1}$ we define the {\em $l$-cylinder}
(w.r.t.\  the map $\hat{T}$) by
$$
\zeta_{i_0, \hdots, i_l}
= \Lambda_{i_0} \cap \hat{T}^{-1}\Lambda_{i_1} \cap \hat{T}^{-2}\Lambda_{i_2} \cap \hdots \cap \hat{T}^{-l}\Lambda_{i_l},
$$
and denote by $\mathfrak{I}$ the collection of indices $(i_0,\dots,i_l)$
such that the associated cylinder $\zeta_{i_0, \hdots, i_l}$ is non-empty.\\
{\bf (II)} For every $\Lambda_i$ let's define the subset consisting exclusively of points that only visit short beams:
$$
\tilde{\Lambda}_i = \{x \in \Lambda_i: \forall l \leq n, \; R(\hat{T}^{l} x) \leq s \}.
$$
Let us note that if the original beam $\Lambda_i$ happens to be tall (i.e. $R_i>s$)  then the corresponding
$\tilde{\Lambda}_i$ will be empty. With $\Lambda=\bigcup_i\Lambda_i$ and
$\tilde\Lambda=\bigcup_i\tilde\Lambda_i$ we obtain in particular
$\Lambda\setminus\tilde\Lambda=\{x\in\Lambda: \exists \, l\in[0,n] \mbox{ s.t. } R(\hat{T}^{l} x) \leq s\}$.
Similarly we define the restriction of a cylinder $\zeta_{i_0, \hdots, i_l}$ to short returns  by
$$
\tilde{\zeta}_{i_0, \hdots, i_l} =\left\{x\in \zeta_{i_0, \hdots, i_l}: R(\hat{T}^j(x))\le s\;\forall\; j=0,\dots,l\right\}.
$$
In other words
\begin{equation} \label{cylinderTildeMeasure}
\tilde{\zeta}_{i_0, \hdots, i_l} =
\begin{cases}
{\zeta}_{i_0, \hdots, i_l}& \text{if } R_{i_0},\dots,R_{i_l}\le s \\
\varnothing & \text{otherwise}.
\end{cases}
\end{equation}
In particular we see that if one of the beams on the cylinder's path is tall,
i.e. $R_{i_j}>s$ for a $j\in[0,l]$,  then it must have originated inside
$\Lambda_{i_0}\setminus\tilde\Lambda_{i_0}$. Thus
\begin{equation}\label{long.cylinders}
\bigcup_{i_0, \hdots, i_l}{\zeta}_{i_0, \hdots, i_l}\setminus\tilde{\zeta}_{i_0, \hdots, i_l}
\subset\bigcup_{i_0}\Lambda_{i_0}\setminus\tilde\Lambda_{i_0}
\hspace{1cm}\mbox{ and } \hspace{1cm}
\bigcup_i\tilde\Lambda_i
\subset\bigcup_{i_0, \hdots, i_l}\tilde{\zeta}_{i_0, \hdots, i_l}
\end{equation}
as long as  $l\le n$.\\
{\bf (III)} For given $n$, $j$  and $i_0 \in \mathbb{N}$, $j < R_{i_0} \leq s$ we define the set of
 `suitable' symbols by
\begin{equation} \label{index set}
I_{i_0, j, n} = \biggl \{ (i_0, \hdots, i_l)\in \mathfrak{I}:
\sum_{k=0}^{l-1} R_{i_k}\le n+j <\sum_{k=0}^l R_{i_k}\biggr \}.
\end{equation}
Accordingly a string of symbols $(i_0, \hdots, i_l)$ is called {\em $(n,j)$-minimal} if it
satisfies the property $R^l \leq n+j < R^{l+1}$, where $R^l=\sum_{k=0}^{l-1} R_{i_k}$.
Note that for any given values $j$ and $n$ the cylinders indexed by $I_{i, j, n}$ partition the beam base
$\Lambda_{i}$, up to set of measure zero, i.e.\
$$
{\Lambda}_i = \bigsqcup_{\tau \in I_{i, j, n}} \hspace{-0.25cm}{\zeta}_{\tau}.
$$
In what follows we shall often write $I$, instead of $I_{i, j, n}$.

\vspace{3mm}

\noindent If $\tau = (i_0, \hdots, i_l)$ then let $\tau'=(i_0, \hdots, i_{l-1})$. Since
${\zeta}_{\tau} = {\zeta}_{\tau'} \cap \hat{T}^{-l}\Lambda_{i_l} \subset {\zeta}_{\tau'}$, we will write
 $\tau \subset \tau'$ to reflect the relationship between the cylinders.

\vspace{3mm}

\noindent The remainder of this section is taken up by providing some essential estimates
involving the quantities introduced.

\begin{lem} \label{cylinderDiameterLem}
The diameter of the cylinder set $\zeta_{i_0, \hdots, i_l}$ restricted to unstable leaves is exponentially small:
\begin{equation}
|\zeta_{i_0, \hdots, i_l}\cap \gamma^u| \leq C_0 \, \alpha^{l+1}.
\end{equation}
\end{lem}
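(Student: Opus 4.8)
The plan is to prove the diameter bound by unwinding the definition of the cylinder $\zeta_{i_0,\dots,i_l}$ and applying the leafwise contraction estimate in Assumption~(A1)(b). Fix an unstable leaf $\gamma^u$ and take any two points $x,y\in\zeta_{i_0,\dots,i_l}\cap\gamma^u$. By the definition of the cylinder, $\hat{T}^kx$ and $\hat{T}^ky$ both lie in $\Lambda_{i_k}$ for every $k=0,1,\dots,l$, so they are never separated at times $0,1,\dots,l$; hence $s(x,y)\ge l+1$. Since $x$ and $y$ lie on the same unstable leaf, they satisfy the hypothesis of (A1)(b), and we may take $k=0$ there (which is $<s(x,y)$ as $s(x,y)\ge l+1\ge 1$) to conclude
\begin{equation*}
d(x,y)=d(\hat{T}^0x,\hat{T}^0y)\le C_0\,\alpha^{s(x,y)-0}=C_0\,\alpha^{s(x,y)}\le C_0\,\alpha^{l+1},
\end{equation*}
where the last inequality uses $s(x,y)\ge l+1$ and $0<\alpha<1$. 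Taking the supremum over $x,y$ gives $|\zeta_{i_0,\dots,i_l}\cap\gamma^u|\le C_0\,\alpha^{l+1}$, which is the claim.

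There is one point that deserves a word of care: Assumption~(A1)(b) is literally stated for $x,y\in\Lambda$ with the contraction running in steps of $\hat{T}$, i.e.\ it controls $d(\hat{T}^kx,\hat{T}^ky)$ in terms of the \emph{$\hat T$-separation} time. Since the cylinder $\zeta_{i_0,\dots,i_l}$ is defined using preimages under $\hat T$, this is exactly the setting we are in; the separation time appearing in (A1)(b) is precisely the function $s(\cdot,\cdot)$ defined in paragraph~(II) of Section~\ref{YoungTower}, so there is no mismatch. The only genuinely nontrivial step is the inequality $s(x,y)\ge l+1$, and that is immediate from the definition of the $l$-cylinder: the condition $x,y\in\zeta_{i_0,\dots,i_l}$ forces $\hat T^k x,\hat T^k y$ into the common element $\Lambda_{i_k}$ for each $k\le l$, so the first time they can land in distinct $\Lambda_i$'s is at the earliest at time $l+1$.

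I do not expect any real obstacle here; the statement is a direct corollary of (A1)(b) once the separation-time lower bound is observed, and the proof is two or three lines. The only thing to be slightly vigilant about is using the version of (A1)(b) with $k=0$ and remembering that the bound $C_0\alpha^{s(x,y)}$ is uniform over the leaf, so passing to the diameter (supremum over pairs) is legitimate.
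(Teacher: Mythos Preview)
Your proof is correct and follows essentially the same argument as the paper: establish $s(x,y)\ge l+1$ from the cylinder definition, then apply (A1)(b) with $k=0$ to bound $d(x,y)\le C_0\alpha^{l+1}$ and pass to the supremum. The only difference is that you are slightly more explicit about the unstable-leaf hypothesis and the $k=0$ instantiation, which the paper leaves implicit.
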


\begin{proof}
Let $x$ and $y$ be two points in $\zeta_{i_0, \hdots, i_l}$, then by definition we have
\begin{equation*}
\hat{T}^k x, \hat{T}^k y \in \Lambda_{i_k}\cap\gamma^u \quad \text{for} \quad 0 \leq k \leq l.
\end{equation*}
It follows that $s(x,y) \geq l+1$. Therefore by Assumption~(A1b):
\begin{equation*}
d(x,y) \leq C_0 \, \alpha^{s(x,y)} \leq C_0 \, \alpha^{l+1}
\end{equation*}
and, since the points $x,y$ were arbitrary, we conclude that
\begin{equation*}
|\zeta_{i_0, \hdots, i_l}\cap\gamma^u| \leq C_0 \, \alpha^{l+1}.
\end{equation*}
\end{proof}

\begin{lem} \label{case1}
Let $\tau = (i_0, \hdots, i_l)$ be an index in $I_{i_0, j, n}$ and put $\tau' = (i_0, \hdots, i_{l-1})$. Then
$$
\frac{m_{\gamma^u}(T^{-j}\mathcal{B} \cap \tilde{\zeta}_{\tau'})}
{m_{\gamma^u}(\tilde{\zeta}_{\tau'})}
\leq C_6 \, \mu(\ball)
$$
for any set $\mathcal{B}\subset T^{-n}B_\rho$ and for all $\tilde\zeta_{\tau'}\not=\varnothing$.
\end{lem}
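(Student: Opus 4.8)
The plan is to compare the measure of the preimage $T^{-j}\mathcal{B}$ inside the short cylinder $\tilde\zeta_{\tau'}$ against $\mu(\ball)$ by pushing everything forward under the return map $\hat{T}^{l}$ until the cylinder $\zeta_{\tau'}$ (which is a short cylinder, so its beams all have return times $\le s$) lands back on all of $\Lambda$. First I would observe that $\zeta_{\tau'}$ is a $(l-1)$-cylinder with separation time $s(\zeta_{\tau'})\ge l$, so by the distortion Lemma~\ref{Distortion}(ii) applied with $F = T^{-j}\mathcal{B}\cap\tilde\zeta_{\tau'}$ and $F' = \tilde\zeta_{\tau'}$ and $q = l$,
\begin{equation*}
\frac{m_{\gamma^u}(T^{-j}\mathcal{B}\cap\tilde\zeta_{\tau'})}{m_{\gamma^u}(\tilde\zeta_{\tau'})}
\le C_3\,\frac{m_{\hat\gamma^u}(\hat{T}^{l}(T^{-j}\mathcal{B}\cap\tilde\zeta_{\tau'}))}{m_{\hat\gamma^u}(\hat{T}^{l}\tilde\zeta_{\tau'})},
\end{equation*}
where $\hat\gamma^u = \gamma^u(\hat{T}^{l}\tilde\zeta_{\tau'})$. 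Since $\tilde\zeta_{\tau'}$ is a short $(l-1)$-cylinder and $\hat{T}^{l}$ on it is the composition of $l$ branches each of the form $T^{R_{i_k}}$, we have $\hat{T}^{l-1}\zeta_{\tau'} = \Lambda$ up to the Markov property (in fact $\hat T^{l-1}\zeta_{\tau'}=\Lambda_{i_{l-1}}$... here care is needed; I would use that $\tau'$ is a genuine cylinder so $\hat{T}^{l-1}\zeta_{\tau'}=\Lambda_{i_{l-1}}$ and one more application of $\hat T$ gives all of $\Lambda$), hence $m_{\hat\gamma^u}(\hat{T}^{l}\tilde\zeta_{\tau'}) = m_{\hat\gamma^u}(\Lambda\cap\hat\gamma^u)$, which is bounded below by a uniform constant independent of the cylinder.

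The numerator is then bounded by $m_{\hat\gamma^u}(\hat{T}^{l}T^{-j}\mathcal{B}\cap\hat\gamma^u)$, and here I would track how the map $\hat{T}^{l}$ relates to the ordinary iteration $T$. Because $\tau\in I_{i_0,j,n}$ satisfies $\sum_{k=0}^{l-1}R_{i_k}\le n+j < \sum_{k=0}^{l}R_{i_k}$, the return map iterate $\hat{T}^{l}$ on $\zeta_{\tau'}$ equals $T^{R^{l}}$ with $R^{l}=\sum_{k=0}^{l-1}R_{i_k}\le n+j$; writing $n+j = R^{l}+r$ with $0\le r < R_{i_l}$ we get $\hat{T}^{l}\circ T^{-j} = T^{R^{l}-j}$ restricted to the relevant set and $T^{-n}B_\rho$ pulls back appropriately, so the image $\hat{T}^{l}(T^{-j}\mathcal{B})$ is contained in $T^{-r}B_\rho\cap\Lambda$ for the appropriate leaf. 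Thus the numerator is at most $m_{\hat\gamma^u}(T^{-r}B_\rho\cap\hat\gamma^u)$ for some $0\le r<s$. The final step is to bound $\sup_{0\le r<s}\sup_{\gamma^u} m_{\gamma^u}(T^{-r}B_\rho)$ by a constant multiple of $\mu(B_\rho)$: this is where I expect the real work, and I would use the SRB disintegration $d m = dm_{\gamma^u}\,d\nu(\gamma^u)$ together with the defining formula $\mu(S)=\sum_i\sum_{j=0}^{R_i-1}m(T^{-j}S\cap\Lambda_i)$ from part~(IV), which makes $\mu(B_\rho)$ an integral over unstable leaves of pullbacks of $B_\rho$ under powers of $T$, and compare it to the single-leaf quantity using the transversal regularity of $m$ and the geometric-regularity bound $m_{\gamma^u}(\ball)\le C_2\rho^{\varsigma'}$, $\mu(\ball)\ge C_2\rho^{\hat\varsigma''}$ from Assumption~(A4)(b). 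Collecting the constants $C_3$, the uniform lower bound for $m_{\hat\gamma^u}(\Lambda)$, and the comparison constant yields the claim with $C_6$ absorbing all of them.

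The main obstacle I anticipate is the step relating the leafwise measure $m_{\gamma^u}(T^{-r}B_\rho)$ to the global $\mu(B_\rho)$ in a way that is uniform over all leaves $\gamma^u$ and all $0\le r<s$: a priori a ball can meet a single unstable leaf in a set whose conditional measure is much larger than its $\mu$-measure (the ball is ``thin'' transversally), so one cannot bound $m_{\gamma^u}(B_\rho)$ by $\mu(B_\rho)$ on a leaf-by-leaf basis without using regularity. The resolution is that $\mathcal{B}\subset T^{-n}B_\rho$ and the image under $\hat T^l$ of the short cylinder carries the full transversal extent of $\Lambda$ — i.e. the push-forward is not concentrated on one leaf but spreads over a positive-$\nu$-measure set of leaves — so that after integrating against $d\nu$ one recovers a genuine fraction of $\mu(T^{-r}B_\rho)=\mu(B_\rho)$. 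Making this averaging argument precise, using that $\tilde\zeta_{\tau'}$ is a rectangle with full stable extent and that $\hat T^l$ preserves the product structure (via Assumption~(A1)(d) and the holonomy map $\Theta$), is the technical heart; once the bound $m_{\hat\gamma^u}(\hat T^l F)\le C\,\mu(B_\rho)/\nu(\text{base leaves})$ is in hand, the distortion estimate from Lemma~\ref{Distortion} finishes the argument cleanly.
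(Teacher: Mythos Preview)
Your outline matches the paper's proof: use $\mathcal{B}\subset T^{-n}\ball$ to replace the numerator by $m_{\gamma^u}(T^{-(n+j)}\ball\cap\tilde\zeta_{\tau'})$, push forward by $\hat T^{l}$ via Lemma~\ref{Distortion}(ii), land the denominator on $m_{\hat\gamma^u}(\Lambda)$ (bounded below by a uniform constant), bound the numerator by $m_{\hat\gamma^u}(T^{-b}\ball\cap\Lambda)$ with $b=n+j-R^{l}\in[0,R_{i_l})$ (your $r$), and then compare this single-leaf quantity to $\mu(\ball)$. Your identification of this last comparison as the crux is accurate, and your proposed resolution --- integrate against the transversal measure $d\nu$ to recover $\mu(T^{-b}\ball)=\mu(\ball)$ --- is exactly what the paper does.

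One correction worth flagging: the geometric-regularity bounds from Assumption~(A4)(b) are a red herring here and would not close the argument. The set $T^{-b}\ball$ is not a ball, so the bound $m_{\gamma^u}(\ball)\le C_2\rho^{\varsigma'}$ does not apply to it, and in any case there is no hypothesis guaranteeing $\rho^{\varsigma'}\lesssim\mu(\ball)$. The paper instead uses only that conditional measures on different unstable leaves of $\Lambda$ are uniformly comparable (holonomy regularity, (A1)(c)(d)), so that the single-leaf measure $m_{\hat\gamma^u}(T^{-b}\ball\cap\Lambda)$ is bounded by a constant times the $\nu$-average
\[
\int m_{\gamma^u}(T^{-b}\ball\cap\Lambda)\,d\nu(\gamma^u)=m(T^{-b}\ball\cap\Lambda)\le\mu(T^{-b}\ball)=\mu(\ball),
\]
the inequality being just the $j=0$ term in the tower formula for $\mu$. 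Drop the appeal to (A4)(b) and your sketch is the paper's proof.
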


\begin{proof}
By inclusion we have
$$
\frac{m_{\gamma^u}(T^{-j}\mathcal{B} \cap \tilde{\zeta}_{\tau'})}
{m_{\gamma^u}(\tilde{\zeta}_{\tau'})}
\leq \frac{m_{\gamma^u}(T^{-(n+j)}\ball \cap \tilde{\zeta}_{\tau'})}
{m_{\gamma^u}(\tilde{\zeta}_{\tau'})}.
$$
Let the number $b$ be such that $n+j-b = R^l$. Recall that $\tau \in I$ means that it is $(n,j)$-minimal,
 i.e.\ that $n+j$ lies between $R^l$ and $R^{l+1}$, we have $0 \leq b < R_{i_l}$. Further $T^{n+j-b} = \hat{T}^l$. Recall that $s({\zeta}_{\tau'}) = l$, thus we can push both the numerator and the denominator forward by $\hat{T}^l$ and use distortion, Lemma~\ref{Distortion}
and~(A1)(a), to obtain for $\tilde\zeta_{\tau'}\not=\varnothing$:
\begin{equation}\label{comparison}
\frac{m_{\gamma^u}(T^{-(n+j)}\ball \cap \tilde{\zeta}_{\tau'})}{m_{\gamma^u}(\tilde{\zeta}_{\tau'})}
\leq C_3 \, \frac{m_{\hat\gamma^u}(T^{-b}\ball \cap \hat{T}^l{\zeta}_{\tau'})}
{m_{\hat\gamma^u}(\hat{T}^l{\zeta}_{\tau'})}
\end{equation}
since by assumption and~\eqref{cylinderTildeMeasure} $\tilde\zeta_{\tau'}=\zeta_{\tau'}$.
Here we put again $\hat\gamma^u=\gamma^u(\hat{T}^lx)$ for $x\in\zeta_{\tau'}\cap\gamma^u$.
Now $\hat{T}^l({\zeta}_{\tau'}\cap\gamma^u) = \Lambda\cap\hat\gamma^u$, 
because $s({\zeta}_{\tau'}) = l$.
 For the numerator we obtain
$$
m_{\hat\gamma^u}(T^{-b}\ball \cap \Lambda) 
\le c_1\int m_{\hat\gamma^u}(T^{-b}\ball \cap \Lambda)\,d\nu(\hat\gamma^u)
\le c_1\mu(T^{-b}\ball)
=\mu(\ball)
$$
for some $c_1$, and for the denominator we use that 
$\mu_{\hat\gamma^u}(\Lambda)\ge c_2$ for some $c_2>0$. The lemma now follows
with $C_6=C_3c_1/c_2$.
\end{proof}

\begin{lem} \label{sumCylind}
Consider a collection of cylinders $\zeta_{\tau'} = \zeta_{i_0, \hdots, i_{l-1}} \in \Lambda_{i_0}$ such that
\begin{align*}
& i) \quad \, \exists \tau \subset \tau' \text{ with } \tau \in I_{i_0,j,n}, \\
& ii) \quad \zeta_{\tau'} \cap T^{-j}\ball \ne \varnothing.
\end{align*}
Then there exists a constant $C_7$ such that
$$
\sum_{\substack{\tau' | \exists \tau \subset \tau' \\ \tau \in I \\ T^{-j}\ball \, \cap \, {\zeta}_{\tau'} \ne \varnothing}} \hspace{-0.6cm} m({\tilde\zeta}_{\tau'}) \leq m(\gamma^s(T^{-j} (B_{\rho + C_7 \, \alpha^{n/s}})) \cap \Lambda_{i_0}),
$$
where we use the notation $\gamma^s(\mathcal{B})
=
\bigcup_{\gamma^s: \gamma^s\cap \mathcal{B}}\gamma^s$.
\end{lem}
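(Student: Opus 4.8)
The plan is to bound the sum over the relevant cylinders $\zeta_{\tau'}$ by the measure of a single stable-saturated set that slightly thickens $T^{-j}\ball$. The key observation is that the cylinders $\zeta_{\tau'}$ appearing in the sum are pairwise disjoint (they are distinct $(l-1)$-cylinders, or more precisely each is a $\zeta_{i_0,\dots,i_{l-1}}$ inside the fixed beam $\Lambda_{i_0}$, and cylinders of the $\hat T$-partition with the same final level index are disjoint), so $\sum m(\tilde\zeta_{\tau'}) \le \sum m(\zeta_{\tau'}) = m\big(\bigcup_{\tau'} \zeta_{\tau'}\big)$. It therefore suffices to show that $\bigcup_{\tau'} \zeta_{\tau'} \subset \gamma^s\big(T^{-j}(B_{\rho+C_7\alpha^{n/s}})\big)\cap\Lambda_{i_0}$.

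First I would fix a cylinder $\zeta_{\tau'}$ in the sum. By hypothesis (ii) there is a point $z\in\zeta_{\tau'}$ with $T^j z\in\ball$, and by hypothesis (i) there is $\tau=(i_0,\dots,i_l)\subset\tau'$ with $\tau\in I_{i_0,j,n}$, meaning the string is $(n,j)$-minimal: $R^l\le n+j<R^{l+1}$ where $R^l=\sum_{k=0}^{l-1}R_{i_k}$. In particular $\zeta_\tau$ is nonempty, and since all the relevant beams have return time $\le s$ (the sum is over cylinders with $\tilde\zeta_{\tau'}\ne\varnothing$, so $R_{i_0},\dots,R_{i_{l-1}}\le s$), we get a lower bound $n+j<R^{l+1}\le R^l + s\le ls+s=(l+1)s$, hence $l+1> (n+j)/s \ge n/s$. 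Now I want to compare an arbitrary point $x\in\zeta_{\tau'}$ with the point $z$: writing $w=\gamma^u(x)\cap\gamma^s(z)$ (the product structure in the rectangle $\Lambda_{i_0}$ gives such a point, and it lies in $\zeta_{\tau'}$ since $\zeta_{\tau'}$ is a union of rectangles respecting the product structure — actually $w\in\gamma^s(z)$ is what we need), we have $x$ and $w$ on the same unstable leaf inside $\zeta_{\tau'}$. By Lemma~\ref{cylinderDiameterLem}, $d(x,w)\le|\zeta_{\tau'}\cap\gamma^u(x)|\le C_0\alpha^l\le C_0\alpha^{l}$, which is $\le C_0\alpha^{n/s-1}$; absorbing the $\alpha^{-1}$ into the constant gives $d(x,w)\le c\,\alpha^{n/s}$. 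Then $T^j w$ and $T^j z$ lie on the same stable leaf (stable leaves are $T$-invariant), and $z\in T^{-j}\ball$; so $T^jw$ is in the stable saturation of $\ball$. Applying $T^j$ to the bound $d(x,w)\le c\alpha^{n/s}$ — here I need to push the displacement forward under the $j$ iterates, $j<R_{i_0}\le s$ — one gets (using (A1)(b) along the stable direction, or rather bounded expansion over finitely many short flights) $d(T^jx,T^jw)\le C_7'\alpha^{n/s}$ for an appropriate $C_7'$. Combining, $T^jx$ lies within $C_7'\alpha^{n/s}$ of a point on the stable leaf through a point of $\ball$, so $x\in T^{-j}\gamma^s(B_{\rho+C_7\alpha^{n/s}})$; since also $x\in\Lambda_{i_0}$, this puts $x$ in the asserted set and finishes the inclusion.

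The main obstacle I anticipate is getting the diameter estimate to survive the application of $T^j$ with the right power of $\alpha$, i.e.\ justifying $d(T^jx,T^jw)\le C_7\alpha^{n/s}$. Lemma~\ref{cylinderDiameterLem} controls the \emph{unstable} diameter of $\zeta_{\tau'}$ before iteration, but $T^j$ acts by expansion on unstable leaves, so one cannot simply push the bound forward blindly. The resolution should be to arrange the comparison so that the expansion is controlled: either (a) observe that $T^j$ for $j<R_{i_0}$ corresponds to a piece of a single return, and the unstable leaf $\zeta_{\tau'}\cap\gamma^u$ is still far from being ``unfolded'' by $\hat T$ since $x,w$ agree to separation time $\ge l$, so $T^jx$ and $T^jw$ stay within an $\hat T$-cylinder of depth $\ge l-1$ and (A1)(b) applied at the shifted time gives $d(\hat T^0 T^jx,\cdot)\le C_0\alpha^{(l-\text{const})}$; or (b) more carefully, note $T^j(\zeta_{\tau'}\cap\gamma^u)$ sits inside $T^j\Lambda_{i_0}$ which is one of the tower levels of a short beam, and use that $x,w$ lie on the same local stable leaf after we chose $w\in\gamma^s(z)$ — wait, $x\notin\gamma^s(z)$ in general. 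I think the cleanest route is: take $w=\gamma^s(x)\cap\gamma^u(z)$ instead, so $w$ is on the \emph{unstable} leaf of $z$, hence $d_{\gamma^u}(w,z)\le C_0\alpha^{l}$, and then $T^jw,T^jz$ — no, these are on the same unstable leaf so expansion hurts again. The actual fix is to use (A1)(e)/(A1)(b) which bound $d(\hat T^n x,\hat T^n y)$ for stable-leaf-equivalent points: choose $w=\gamma^s(z)\cap\gamma^u(x)$, then $x,w$ same unstable leaf in $\zeta_{\tau'}$ so $d(x,w)$ small by Lemma~\ref{cylinderDiameterLem}; but I want $T^j$ of this. Since $T^j$ is a fixed bi-Lipschitz map on the short beam ($j<R_{i_0}\le s$, finitely many such maps up to the flight, each with bounded derivative on $M$ a manifold — though here $M$ need not be a manifold), I'd instead argue that $T^j\zeta_{\tau'}$ is contained in a set of unstable-diameter still $\lesssim\alpha^{l-\text{const}}$ because $\hat T^{l-1}$ has not yet been applied, using the Markov/separation structure: $s(T^jx,T^jw)\ge l-1$ in the shifted tower, hence (A1)(b) bounds $d(T^jx,T^jw)\le C_0\alpha^{l-1-0}$. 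That gives the claim with $C_7=C_0\alpha^{-2}$, say. I would present this carefully as the crux of the argument.
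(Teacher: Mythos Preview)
Your approach is essentially the paper's, and your final insight is exactly right: for $1\le j<R_{i_0}$ one has $s(T^j\cdot,T^j\cdot)\ge l-1$ on $\zeta_{\tau'}$, so (A1)(b) gives $|T^j(\zeta_{\tau'}\cap\gamma^u)|\le C_0\,\alpha^{l-1}$, whence $C_7=C_0\alpha^{-2}$. The paper does precisely this, without ever introducing an auxiliary point: it just records the unstable diameter bound on $T^j\zeta_{\tau'}$, observes $T^j\zeta_{\tau'}\cap\ball\ne\varnothing$, and concludes that every local stable leaf through $T^j\zeta_{\tau'}$ meets $B_{\rho+C_7\alpha^{n/s}}$.

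Where you go astray is in the bookkeeping with $w$. You consider $w=\gamma^s(x)\cap\gamma^u(z)$, note correctly that $w,z$ lie on the same unstable leaf, and then \emph{reject} this choice because ``expansion hurts''. But that is the right choice: the separation-time argument you invoke a few lines later applies verbatim to $w,z$ (both lie in $\zeta_{\tau'}$, $s(w,z)\ge l$, hence $s(T^jw,T^jz)\ge l-1$ and $d(T^jw,T^jz)\le C_0\alpha^{l-1}$). Since $T^jz\in\ball$, this gives $T^jw\in B_{\rho+C_7\alpha^{n/s}}$, and since $w\in\gamma^s(x)$ you are done. Your alternative choice $w=\gamma^s(z)\cap\gamma^u(x)$ gives $d(T^jx,T^jw)$ small, but $T^jw$ lies on $\gamma^s(T^jz)$, possibly far from $\ball$ along the stable direction; so it does \emph{not} directly place a point of $\gamma^s(x)$ inside the enlarged ball. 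In short: undo that rejection, or better, drop the pointwise $w$ entirely and argue via the unstable diameter of $T^j\zeta_{\tau'}$ as the paper does.

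One further remark: your bound $R^{l+1}\le (l+1)s$ uses $R_{i_l}\le s$, which does not follow from $\tilde\zeta_{\tau'}\ne\varnothing$ alone (that only constrains $R_{i_0},\dots,R_{i_{l-1}}$). The paper makes the same tacit step; it is harmless because in the application the sum originates from terms $\tilde\zeta_\tau$ with $\tau=(i_0,\dots,i_l)$, and $\tilde\zeta_\tau\ne\varnothing$ forces $R_{i_l}\le s$ as well.
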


\begin{proof}
All the unions, sums and maxima in this proof are subscripted with ``$\tau' \, | \, \exists \tau \subset \tau', \tau \in I_{i_0,j,n}, T^{-j}\ball \, \cap \, {\zeta}_{\tau'} \ne \varnothing$," unless otherwise specified.
 From Lemma~\ref{cylinderDiameterLem} we know that
$$
|{\zeta}_{\tau'}\cap\gamma^u| \leq C_0 \, \alpha^{s({\zeta}_{\tau'})} = C_0 \, \alpha^{l}
$$
for unstable leaves $\gamma^u$.
Without loss of generality we can assume that $\tilde\zeta_{\tau'}=\zeta_{\tau'}$.
Since ${\zeta}_{\tau'}$ contains a cylinder $\tau$ satisfying $(n,j)$-minimality, we deduce from
$n+j < \sum_{k=0}^{l} R_{i_k} \leq (l+1) \, s$ a lower bound on $l$:
\begin{equation*}
l \geq \frac{n+j}{s} -1 \geq \frac{n}{s} -1.
\end{equation*}
Thus
$$
|{\zeta}_{\tau'}\cap\gamma^u| \leq C_0 \, \alpha^{\frac{n}s -1}
$$
and for $j<R_{i_0}$ we can further say by Assumption~(A1)(b) that
$$
|T^j ({\zeta}_{\tau'}\cap\gamma^u)| \leq  C_0\, \alpha^{\frac{n}s-2}
$$
as $s(T^j\zeta_{\tau'})=s(\zeta_{\tau'})-1$ for $1\le j<R_{i_0}$.
Now put $C_7=c_0\alpha^{-2}$.
Since $\zeta_{\tau'} \cap T^{-j}\ball \ne \varnothing$ and therefore $T^j \zeta_{\tau'} \cap \ball \ne \varnothing$
we obtain
$$
T^j {\zeta}_{\tau'} \subset 
\bigcup_{\gamma^s: T^j\gamma^s\cap B_{\rho + C_7 \, \alpha^{n/s}}\not=\varnothing}T^j\gamma^s.
$$
As the above estimate works for any cylinder whose subscript $\tau'$ satisfies ``$\tau' \, | \, \exists \tau \subset \tau', \tau \in I_{i_0,j,n}, T^{-j}\ball \, \cap \, {\zeta}_{\tau'} \ne \varnothing$," we have
$$
\bigcup {\zeta}_{\tau'} \subset  \gamma^s(T^{-j}(B_{\rho + C_7 \, \alpha^{n/s}})).
$$
Moreover, since $\bigcup {\zeta}_{\tau'} \subset \Lambda_{i_0}$ we in fact have
\begin{equation*}
\bigcup {\zeta}_{\tau'} \subset \gamma^s(T^{-j} ((B_{\rho + C_7 \, \alpha^{n/s}})) \cap \Lambda_{i_0},
\end{equation*}
and we can conclude
$$
\sum m({\zeta}_{\tau'}) =m \biggl(\bigcup {\zeta}_{\tau'} \biggr)
\leq m(\gamma^s(T^{-j} (B_{\rho + C_7 \, \alpha^{n/s}})) \cap \Lambda_{i_0}).
$$
\end{proof}

\subsection{Measure of the forbidden set $\mathcal{X}_\rho$} \label{removed_sets}

We will need the following lemma.


\begin{lem} \label{tallTowersBad}(\cite{CC13} Lemma~A.3)
Let $\ell_0$ and $\ell_1$ be two finite positive measures on a $D$-dimensional Riemannian manifold $M$. For $\omega \in (0,1)$ and $\rho \in (0,1)$ define
\begin{equation*}
\mathcal{D} = \{ \mathsf{x} \in M: \, \ell_1(\ball(\mathsf{x})) \geq \omega \ell_0(\ball(\mathsf{x})) \}. \end{equation*}
There exists an integer $p(D)$ such that
\begin{equation*}
\ell_0(\mathcal{D}) \leq p(D) \, \omega^{-1} \ell_1(M).
\end{equation*}
\end{lem}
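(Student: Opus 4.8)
The plan is to use a Vitali-type covering argument to control the set $\mathcal{D}$. The key idea is that $\mathcal{D}$ consists of points where $\ell_1$ is ``large relative to $\ell_0$'' on a ball, and such sets cannot be too massive for $\ell_0$ because the total $\ell_1$-mass is finite. First I would fix $\omega,\rho\in(0,1)$ and observe that for every $\mathsf{x}\in\mathcal{D}$ we have the defining inequality $\ell_1(\ball(\mathsf{x}))\ge\omega\,\ell_0(\ball(\mathsf{x}))$. The collection $\{\ball(\mathsf{x}):\mathsf{x}\in\mathcal{D}\}$ is a cover of $\mathcal{D}$ by balls of one fixed radius $\rho$. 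By the Besicovitch covering theorem on the $D$-dimensional Riemannian manifold $M$ (or, since all balls have the same radius, a simpler Vitali-type packing argument), there is a constant $p(D)$ depending only on the dimension $D$ and a countable subfamily $\{\ball(\mathsf{x}_i)\}_i$ with $\mathsf{x}_i\in\mathcal{D}$ such that $\mathcal{D}\subset\bigcup_i\ball(\mathsf{x}_i)$ and such that no point of $M$ lies in more than $p(D)$ of the balls $\ball(\mathsf{x}_i)$, i.e. $\sum_i\ind_{\ball(\mathsf{x}_i)}\le p(D)$.

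Next I would chain the estimates: using countable subadditivity of $\ell_0$, then the defining inequality of $\mathcal{D}$ on each selected ball, and finally the bounded-overlap property together with $\ell_1\ge0$:
\begin{equation*}
\ell_0(\mathcal{D}) \;\le\; \sum_i \ell_0(\ball(\mathsf{x}_i)) \;\le\; \omega^{-1}\sum_i \ell_1(\ball(\mathsf{x}_i)) \;=\; \omega^{-1}\int_M \sum_i \ind_{\ball(\mathsf{x}_i)}\, d\ell_1 \;\le\; \omega^{-1}\, p(D)\, \ell_1(M).
\end{equation*}
This gives exactly the claimed bound $\ell_0(\mathcal{D})\le p(D)\,\omega^{-1}\ell_1(M)$.

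One small technical point to address is the case where $\ell_0(\ball(\mathsf{x}))$ or $\ell_1(\ball(\mathsf{x}))$ is infinite or zero; since $\ell_0,\ell_1$ are finite measures the values are finite, and if $\ell_0(\ball(\mathsf{x}))=0$ then such $\mathsf{x}$ contributes nothing to $\ell_0(\mathcal{D})$, so one may harmlessly restrict attention to balls of positive $\ell_0$-measure. Also, if $\mathcal{D}$ fails to be $\ell_0$-measurable one works with outer measure, which does not affect the inequality. The main obstacle, such as it is, is invoking a covering theorem with an overlap constant depending only on the dimension: on a general Riemannian manifold the Besicovitch theorem holds for balls of uniformly bounded radius, and since here all balls share the single radius $\rho$ this is unproblematic — but one should be slightly careful that the constant $p(D)$ can be taken uniform in $\rho$, which follows because the relevant local geometry at scale $\rho$ is uniformly comparable to Euclidean space on the compact manifold $M$ (and in any event only $\rho\in(0,1)$ with $\rho$ small is needed in the applications). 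This is precisely the statement borrowed from~\cite{CC13}, Lemma~A.3, so I would simply cite it after sketching the argument above.
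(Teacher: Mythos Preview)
Your proposal is correct. The paper does not give its own proof of this lemma but merely cites \cite{CC13}, Lemma~A.3; your Besicovitch/bounded-overlap covering argument is exactly the standard proof (and indeed the one in \cite{CC13}), so there is nothing to add.
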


\noindent We will also need the following result on the size of the set where tall
towers dominate, that is where $R_i$ is larger than $s$. Recall that
$\Omega(s) = [\, \sum_{i, R_i>s} R_i \, m(\Lambda_i) \,]^{\frac{1}{2}} $.

\begin{lem} \label{2tallTowersEst}
For $n,s\ge1$ there exist sets $\mathcal{D}_{n,s}\subset M$ such that
the non-principal part contributions are estimated as
$$
\sum_{i} \sum_{j=0}^{R_i-1} m(T^{-j}\mathcal{B} \cap (\Lambda_i \setminus \tilde{\Lambda}_i))
 <\sqrt{n+2}\,\Omega(s)\mu(\ball)
$$
for any $\mathcal{B}\subset B_\rho(\mathsf{x})$ and  $\mathsf{x}\not\in\mathcal{D}_{n,s}$  where
($p(D)$ as above)
$$
\mu(\mathcal{D}_{n,s}) \leq p(D) \sqrt{n+2}\,\Omega(s).
$$
\end{lem}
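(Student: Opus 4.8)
The statement asserts two things at once: a quantitative bound on the mass of trajectories entering tall beams $\Lambda_i \setminus \tilde\Lambda_i$ on their flight, valid off a "bad" set $\mathcal{D}_{n,s}$, together with a bound on the measure of $\mathcal{D}_{n,s}$ itself. The natural approach is to invoke Lemma~\ref{tallTowersBad} with a well-chosen pair of measures $\ell_0,\ell_1$. First I would set $\ell_0 = \mu$ and define $\ell_1$ to be the measure that records exactly the quantity we want to control: the measure $\ell_1(\mathcal{B}) := \sum_i \sum_{j=0}^{R_i-1} m(T^{-j}\mathcal{B} \cap (\Lambda_i \setminus \tilde\Lambda_i))$ for Borel $\mathcal{B}\subset M$. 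This is manifestly a finite positive measure (it is dominated by $\mu$ via the Young-tower formula $\mu(S) = \sum_i\sum_{j=0}^{R_i-1} m(T^{-j}S\cap\Lambda_i)$). The whole point is then to estimate its total mass $\ell_1(M)$.

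\smallskip

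\textbf{Estimating $\ell_1(M)$.} Taking $\mathcal{B} = M$ gives $\ell_1(M) = \sum_i \sum_{j=0}^{R_i-1} m(\Lambda_i\setminus\tilde\Lambda_i) = \sum_i R_i\, m(\Lambda_i\setminus\tilde\Lambda_i)$. A point $x\in\Lambda_i$ lies in $\Lambda_i\setminus\tilde\Lambda_i$ precisely when some $\hat{T}^l x$ with $0\le l\le n$ lands in a tall beam (return time $>s$); by the definition of $\tilde\Lambda_i$ recalled in Section~\ref{approx_ball}, $\Lambda_i\setminus\tilde\Lambda_i = \{x\in\Lambda_i : \exists\, l\in[0,n]\text{ with } R(\hat{T}^l x) > s\}$. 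Summing over $i$ and using that $\hat{T}$ preserves $m$ (it is the return map of the measure-preserving $T$ to $\Lambda$, equipped with the SRB measure $m$), the event "$R(\hat{T}^l x)>s$" has $m$-measure $\sum_{i:R_i>s} m(\Lambda_i)$ for each fixed $l$. A union bound over $l=0,\dots,n$ and weighting by the return time gives a bound of the shape $\sum_i R_i\, m(\Lambda_i\setminus\tilde\Lambda_i) \lesssim (n+1)\sum_{i:R_i>s} R_i\, m(\Lambda_i) = (n+1)\,\Omega(s)^2$; being slightly more careful with the weight (one must account for the return time $R_i$ of the originating beam, not only the tall beam hit, and for the $j$-sum of length $R_i$) produces the factor $(n+2)\,\Omega(s)^2$. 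Thus $\ell_1(M) \le (n+2)\,\Omega(s)^2$ up to the implied constant, which I expect to be absorbable into the statement's constants.

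\smallskip

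\textbf{Applying Lemma~\ref{tallTowersBad} and closing the argument.} Now apply Lemma~\ref{tallTowersBad} with $\ell_0=\mu$, $\ell_1$ as above, and the threshold $\omega = \sqrt{n+2}\,\Omega(s)$, to the set
$$
\mathcal{D}_{n,s} := \{\mathsf{x}\in M : \ell_1(\ball(\mathsf{x})) \ge \omega\, \mu(\ball(\mathsf{x}))\}.
$$
The lemma yields $\mu(\mathcal{D}_{n,s}) \le p(D)\,\omega^{-1}\ell_1(M) \le p(D)\,\omega^{-1}(n+2)\Omega(s)^2 = p(D)\sqrt{n+2}\,\Omega(s)$, which is exactly the claimed bound on $\mu(\mathcal{D}_{n,s})$. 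For $\mathsf{x}\notin\mathcal{D}_{n,s}$ we have by definition $\ell_1(\ball(\mathsf{x})) < \omega\,\mu(\ball(\mathsf{x})) = \sqrt{n+2}\,\Omega(s)\,\mu(\ball(\mathsf{x}))$, and since $\mathcal{B}\subset\ball(\mathsf{x})$ implies $\ell_1(\mathcal{B})\le\ell_1(\ball(\mathsf{x}))$ by monotonicity of the measure $\ell_1$, this gives the desired estimate
$$
\sum_i\sum_{j=0}^{R_i-1} m(T^{-j}\mathcal{B}\cap(\Lambda_i\setminus\tilde\Lambda_i)) < \sqrt{n+2}\,\Omega(s)\,\mu(\ball),
$$
completing the proof. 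The main obstacle is the bookkeeping in the $\ell_1(M)$ estimate: one has to make the union bound over $l\in[0,n]$ interact correctly with the weighting by $R_i$ and the inner sum over $j\in[0,R_i)$, so that the total comes out proportional to $(n+2)\,\Omega(s)^2$ rather than, say, $(n+2)^2$ or $n\,\Omega(s)$; everything else is a routine application of Lemma~\ref{tallTowersBad} once this combinatorial estimate is in hand.
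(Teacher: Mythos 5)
Your plan matches the paper's proof exactly in structure: the paper likewise takes $\ell_0=\mu$, defines $\ell_1(\cdot)=\sum_i\sum_{j=0}^{R_i-1}m(T^{-j}(\cdot)\cap(\Lambda_i\setminus\tilde\Lambda_i))$, sets $\mathcal{D}_{n,s}$ as the level set with threshold $\omega=\sqrt{n+2}\,\Omega(s)$, and reduces everything to the bound $\ell_1(M)\le(n+2)\Omega(s)^2$ before invoking Lemma~\ref{tallTowersBad}. The one place where your sketch is not yet a proof is precisely that $\ell_1(M)$ estimate. Your intermediate claim
$\sum_i R_i\, m(\Lambda_i\setminus\tilde\Lambda_i)\lesssim (n+1)\sum_{i:R_i>s}R_i\,m(\Lambda_i)$
does not follow from ``a union bound over $l$ and weighting by the return time'': once you take the union bound over $l$ you are left with $m(\Lambda\setminus\tilde\Lambda)\le (n+1)\,m(R>s)$, which has forgotten the index $i$, so there is no remaining object to reweight by $R_i$ (and $R_i$ is unbounded, so the reweighting is not innocuous). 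What the paper actually does is split the index set. For $R_i\le s$ one replaces each weight $R_i$ by $s$ and only then applies the union bound, getting $s\sum_{R_i\le s}m(\Lambda_i\setminus\tilde\Lambda_i)\le s\,(n+1)\,m(R>s)$, and then uses the elementary inequality $s\,m(R>s)\le\sum_{j>s}j\,m(R=j)=\Omega(s)^2$ to convert this into $(n+1)\Omega(s)^2$. For $R_i>s$ one observes that $\tilde\Lambda_i=\varnothing$, so the contribution is exactly $\sum_{R_i>s}R_i\,m(\Lambda_i)=\Omega(s)^2$. Adding these gives $(n+2)\Omega(s)^2$. You correctly anticipated that this bookkeeping is where all the work is, but the two moves you need to make it rigorous — the split at $R_i=s$ and the bound $s\,m(R>s)\le\Omega(s)^2$ — are not in your write-up, and the ``union bound then reweight'' phrasing would not lead you there.
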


\begin{proof}
We employ Lemma~\ref{tallTowersBad},  with $\ell_0 = \mu$ and
$\ell_1(\cdot)
=\sum_{i} \sum_{j=0}^{R_i-1} m(T^{-j}(\cdot) \cap (\Lambda_i \setminus \tilde{\Lambda}_i))$.
Define
$$
\mathcal{D}_{n,s} = \{ \mathsf{x} \in M: \, \ell_1(\ball(\mathsf{x})) \geq \sqrt{n+2}\,\Omega(s) \mu(\ball(\mathsf{x})) \};
$$
from Lemma~\ref{tallTowersBad} we know that
$\mu(\mathcal{D}_{n,s}) \leq p(D) \, ( \sqrt{n+2}\,\Omega(s))^{-1} \ell_1(M)$.
Since $R(\hat{T}^{l} x) > s$ exactly if $x \in \hat{T}^{-l} \{ R > s \}$ we get
$$
\Lambda_i \setminus \tilde{\Lambda}_i
= \{x \in \Lambda_i: \exists \, l \leq n, \text{ such that } R(\hat{T}^l x) \geq s \ \}
= \bigcup_{l=0}^n \hat{T}^{-l} \{ R \geq s \} \cap \Lambda_i.
$$
Since $\tilde\Lambda_i=\varnothing$ for $R_i>i$ we bound the  measure $\ell_1(M)$  as follows
\begin{eqnarray*}
\ell_1(M) &\le& \sum_{i, R_i \leq s} \sum_{j=0}^{R_i-1} m(\Lambda_i \setminus \tilde{\Lambda}_i)
+\sum_{i, R_i > s} \sum_{j=0}^{R_i-1} m(\Lambda_i)\\
& \leq& \sum_{i, R_i \leq s} s \, m(\Lambda_i \setminus \tilde{\Lambda}_i)
+\sum_{i, R_i > s} R_i\, m(\Lambda_i)\\
 & \leq& s \sum_{i, R_i \leq s} m \biggl( \bigcup_{l=0}^n \hat{T}^{-l} \{ R > s \} \cap \Lambda_i \biggr) +\Omega(s)^2\\
 & \leq& s\, m \biggl( \bigcup_{l=0}^n \hat{T}^{-l} \{ R > s \} \biggr) +\Omega(s)^2 \\
 &\leq& s (n+1) m( \{ R > s \}) +\Omega(s)^2\\[0.2cm]
&\le&(n+2)\Omega(s)^2
\end{eqnarray*}
as $s\,m(\{R>s\})\le\sum_{j=s+1}^\infty j\,m(\{R=j\})=\Omega(s)^2$.
Hence
$$
\mu(\mathcal{D}_{n,s}) \leq p(D)  ( \sqrt{n+2}\,\Omega(s))^{-1}  \ell_1(M) =  p(D) \sqrt{n+2}\,\Omega(s).
$$
Outside the set $\mathcal{D}_{n,s}$ we have
$$
\sum_{i, R_i \leq s} \sum_{j=0}^{R_i-1}  m(T^{-j}\mathcal{B}\cap (\Lambda_i \setminus \tilde{\Lambda}_i))
 \leq \sum_{i, R_i \leq s} \sum_{j=0}^{R_i-1} m(T^{-j}(\ball) \cap (\Lambda_i \setminus \tilde{\Lambda}_i)) = \ell_1(\ball)
\leq \sqrt{n+2}\,\Omega(s)\mu(\ball).
$$
\end{proof}

\vspace{3mm}

\noindent Let $\eta\in(\frac3{\lambda-1},\frac12)$ and $\beta<1-\frac3{\lambda-1}$ be according to Assumption~(A4).
Put  $\hat\sigma=\frac{\lambda-1}2\min\left\{\eta, (1-\beta)\right\}-\frac32$. By assumption $\hat\sigma>0$
and we can finally estimate the size of the forbidden set defined by
$$
\mathcal{X}_\rho
 = \bigcup_{n=J}^{p-1}\left(\mathcal{D}_{n,n^\eta} \cup\mathcal{D}_{n,n^{1-\beta}} \right)
\cup\mathcal{E}_\rho
$$
whose parts have essentially been estimated in the previous  lemma and assumption~(A4).

\begin{prop}\label{forbidden.set}
There exist a constant $C_8$ such that
$$
\mu(\mathcal{X}_\rho) \le C_8\abs{\log\rho}^{-\hat\sigma}.
$$
\end{prop}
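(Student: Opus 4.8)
The plan is to estimate the measure of $\mathcal{X}_\rho$ by bounding each of the three pieces in its definition separately and then summing. Recall
$$
\mathcal{X}_\rho = \bigcup_{n=J}^{p-1}\left(\mathcal{D}_{n,n^\eta}\cup\mathcal{D}_{n,n^{1-\beta}}\right)\cup\mathcal{E}_\rho,
$$
so by subadditivity $\mu(\mathcal{X}_\rho)\le\sum_{n=J}^{p-1}\mu(\mathcal{D}_{n,n^\eta})+\sum_{n=J}^{p-1}\mu(\mathcal{D}_{n,n^{1-\beta}})+\mu(\mathcal{E}_\rho)$. The last term is handled directly by Assumption~(A4), which gives $\mu(\mathcal{E}_\rho)\le\abs{\log\rho}^{-\frac{\lambda-4}{3}}$; since $\hat\sigma\le\frac{\lambda-1}{2}\cdot\frac12-\frac32<\frac{\lambda-4}{3}$ for the relevant range of $\lambda$ (this inequality should be checked, or $\hat\sigma$ simply taken small enough), this contribution is absorbed into the final bound.

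For the two sums the key inputs are Lemma~\ref{2tallTowersEst}, which gives $\mu(\mathcal{D}_{n,s})\le p(D)\sqrt{n+2}\,\Omega(s)$, and Lemma~\ref{omegaEst}, which gives $\Omega(s)\le C_5 s^{-\theta}$ with $\theta=(\lambda-1)/2$. Substituting $s=n^\eta$ yields $\mu(\mathcal{D}_{n,n^\eta})\le p(D)C_5\sqrt{n+2}\,n^{-\eta\theta}\le c\, n^{\frac12-\eta\theta}$, and similarly $\mu(\mathcal{D}_{n,n^{1-\beta}})\le c\, n^{\frac12-(1-\beta)\theta}$. By the definition $\hat\sigma=\theta\min\{\eta,1-\beta\}-\frac32$, both exponents satisfy $\frac12-\eta\theta\le-1-\hat\sigma$ and $\frac12-(1-\beta)\theta\le-1-\hat\sigma$, i.e.\ each term is $\le c\,n^{-1-\hat\sigma}$. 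Hence $\sum_{n=J}^{p-1}\mu(\mathcal{D}_{n,n^\eta})\le c\sum_{n=J}^{\infty}n^{-1-\hat\sigma}\le c\,J^{-\hat\sigma}$ since $\hat\sigma>0$ makes the series convergent, with the tail bounded by a constant times $J^{-\hat\sigma}$; the same holds for the other sum.

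It remains to relate $J$ to $\abs{\log\rho}$. Here $J=\mathfrak{a}\abs{\log\rho}$ (the cutoff appearing in the definition of $\mathcal{V}_\rho(\mathfrak{a})$ and in Theorem~1), so $J^{-\hat\sigma}=\mathfrak{a}^{-\hat\sigma}\abs{\log\rho}^{-\hat\sigma}$. Collecting the three contributions gives $\mu(\mathcal{X}_\rho)\le c\,\abs{\log\rho}^{-\hat\sigma}+\abs{\log\rho}^{-\frac{\lambda-4}{3}}\le C_8\abs{\log\rho}^{-\hat\sigma}$ for $\rho$ small, which is the claimed bound.

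The routine parts are the geometric-series summation and the arithmetic comparing exponents; the only point requiring care is making sure the definition of $\hat\sigma$ really does force $\frac12-\theta\min\{\eta,1-\beta\}\le-1-\hat\sigma$ — this is immediate from $\hat\sigma=\theta\min\{\eta,1-\beta\}-\frac32$ — and that the constraint $\eta\in(\frac3{\lambda-1},\frac12)$ together with $\beta<1-\frac3{\lambda-1}$ indeed guarantees $\hat\sigma>0$, which is exactly what the phrase ``By assumption $\hat\sigma>0$'' in the text asserts. Thus there is no genuine obstacle here; the proposition is essentially a bookkeeping corollary of Lemmas~\ref{omegaEst} and~\ref{2tallTowersEst} and Assumption~(A4).
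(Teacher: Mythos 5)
Your proof is correct and follows essentially the same route as the paper: subadditivity over the three pieces, Lemmas~\ref{2tallTowersEst} and~\ref{omegaEst} to bound each $\mu(\mathcal{D}_{n,s})$ by $cn^{-1-\hat\sigma}$, geometric-series summation down to $J^{-\hat\sigma}\asymp\abs{\log\rho}^{-\hat\sigma}$, and Assumption~(A4) for $\mathcal{E}_\rho$. Your check that $\hat\sigma\le\frac{\lambda-7}{4}<\frac{\lambda-4}{3}$ is correct and is actually cleaner than the paper's own step there, which quotes a bound $\abs{\log\rho}^{-\lambda/2}$ not literally matching what (A4) states.
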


\begin{proof}  We estimate the contributions to $\mathcal{X}_\rho$ separately in the three
following paragraphs.\\
{\bf (I)}  By Lemma~\ref{2tallTowersEst}
and since $\Omega(s)\lesssim s^{-\frac{\lambda-1}2}$
$$
\mu \biggl(\bigcup_{n=J}^{p-1} \mathcal{D}_{n,n^\eta} \biggr) \leq p(D) \sum_{n=J}^{p-1}\sqrt{n+2}\,\Omega(n^\eta)
 \leq c_1 \sum_{n=J}^{\infty} n^{\frac{1}{2}} (n^{\eta})^{-\frac{\lambda-1}{2}}
\le c_2J^{\frac{\eta+3-\eta\lambda}2}
$$
as $\eta+1-\eta\lambda<0$ and  $J = \lfloor\mathfrak{a} \, \abs{\log \rho}\rfloor$.
The term with $s=n^{1-\beta}$ is estimated similarly.
 Therefore
$$
\mu \biggl( \bigcup_{n=J}^{p-1}\mathcal{D}_{n,n^\eta}  \biggr)
+ \mu \biggl(\bigcup_{n=J}^{p-1} \mathcal{D}_{n,n^{1-\beta}} \biggr) \leq c_3 \abs{\log \rho}^{-\hat\sigma}.
$$
{\bf (II)} By Assumption~(A4) $\mu(\mathcal{E}_\rho)\le\abs{\log\rho}^{-\lambda/2}$.

\vspace{3mm}

\noindent
Combining the estimates from (I) and (II) results in
$$
\mu(\mathcal{X}_\rho) \leq \mu \biggl(\bigcup_{n=J}^{p-1} \mathcal{D}_{n,s} \biggr)
+\mu \biggl(\bigcup_{n=J}^{p-1} \mathcal{D}_{n,n^{1-\beta}} \biggr)
 +\mu(\mathcal{E}_\rho)
\le C_8\abs{\log\rho}^{-\hat\sigma}
$$
 for some $C_8$ as $\hat\sigma<\frac\lambda2$.
\end{proof}

\subsection{Poisson approximation of the return times distribution} \label{set_up_T1}
To prove Theorem~1 we will employ the Poisson approximation theorem from Section~\ref{poisson}.
Let $\mathsf{x}$ be a point in the phase space and $\ball := \ball(\mathsf{x})$ for $\rho>0$.
 Let $X_n=\ind_{\ball} \circ T^{n-1}$, then we put $N = \floor{t/\mu(\ball)}$,
 where $t$ is a positive parameter. We write $S_a^b=\sum_{n=a}^bX_n$ (and $S=S_1^N$).
Then for any $2 \leq p \leq N$ ($C_{12}$ from Section~\ref{poisson})
\begin{equation} \label{errorUSE}
\Abs{\mathbb{P}(S=k) - \frac{t^k}{k!} \, e^{-t}} \; \leq \; C_{12} ( N(\mathcal{R}_1+\mathcal{R}_2) + p \, \mu(\ball)),
\end{equation}
where
\begin{align*}
\mathcal{R}_1 &= \sup_{\substack {0 <j<N-p \\ 0<q<N-p-j}} \left|\ev(\ind_{\ball} \ind_{S_{p+1}^{N-j}=q})
- \mu(\ball) \, \ev(\ind_{S_{p+1}^{N-j}=q})\right| \\
\mathcal{R}_2 &= \sum_{n=1}^{p-1} \ev(\ind_{\ball} \; \ind_{\ball} \circ T^n).
\end{align*}
Since we restrict to the complement of the set $\mathcal{V}_\rho$ (cf.~\eqref{defM_rho,J})
we have from now on
$$
\mathcal{R}_2 = \sum_{n=J}^{p-1} \mu(\ball \cap T^{-n} \ball),
$$
where  $J = \floor{\mathfrak{a} \, \abs{\log \rho}}$.
 Note that if $k > N$ then $\mathbb{P}(S=k)=0$ and
\begin{equation} \label{yay!_k>N}
\Abs{\mathbb{P}(S=k) - \frac{t^k}{k!} \, e^{-t}} = \frac{t^k}{k!} \, e^{-t} \leq \abs{\log \rho}^{-\frac{\lambda-4}{2}} \qquad \forall k>N
\end{equation}
using the fact that $\mu(B_\rho)\lesssim\rho^{\varsigma'}$ and for $\rho$ sufficiently small.

\vspace{0.5cm}
\noindent We now proceed to estimate the error between the distribution of $S$ and a Poissonian for
$k \leq N$ based on Theorem~\ref{helperTheorem}.
\subsection{Estimating $\mathcal{R}_1$} \label{est_R1_section}

By invariance of the measure $\mu$ we can also write
$$
\mathcal{R}_1 = \sup_{\substack {0 <j<N-p \\ 0<q<N-p-j}}\left| \mu(\ball \cap T^{-p}\{S_{1}^{N-j-p}=q\}) - \mu(\ball) \, \mu(\{S_{1}^{N-j-p}=q\}) \right|.
$$
We now use the decay of correlations~\eqref{corrDecay} to obtain an estimate for
$\mathcal{R}_1$. Approximate $\ind_{\ball}$ by Lipschitz functions
from above and below as follows:
\begin{equation*}
\phi(x) =
\begin{cases}
1 & \text{on $\ball$} \\
0 & \text{outside $B_{\rho + \delta \rho}$}
\end{cases}
\hspace{0.7cm} \text{and} \hspace{0.7cm}
\tilde{\phi}(x) =
\begin{cases}
1 & \text{on $B_{\rho - \delta \rho}$} \\
0 & \text{outside $\ball$}
\end{cases}
\end{equation*}
with both functions linear within the annuli. The Lipschitz norms of both $\phi$ and $\tilde{\phi}$ are equal to $1/\delta\rho$ and $\tilde{\phi} \leq \ind_{\ball} \leq \phi$.

We obtain
\begin{align*}
\mu(\ball \cap \{S_{p}^{N-j}=q\}) - \mu(\ball) \, \mu(\{S_{1}^{N-j-p}=q\})\hspace{-3cm} \\
& \leq \int_M \phi \; (\ind_{S_p^{N-j}=q}) \, d\mu - \int_M \ind_{\ball} \, d\mu \, \int_M \ind_{S_1^{N-j-p}=q} \, d\mu \\[0.2cm]
& =X+Y
\end{align*}
where
\begin{align*}
X&=\left(\int_M \phi \, d\mu - \int_M \ind_{\ball} \, d\mu \right) \int_M \ind_{S_1^{N-j-p}=q} \, d\mu\\
Y&=\int_M \phi \; (\ind_{S_p^{N-j}=q} ) \, d\mu - \int_M \phi \, d\mu \, \int_M \ind_{S_1^{N-j-p}=q} \, d\mu .
\end{align*}
The two terms $X$ and $Y$ are estimated separately.
The first term is estimated as follows:
$$
X \leq\int_M \ind_{S_1^{N-j-p}=q} \, d\mu \, \int_M (\phi - \ind_{\ball}) \, d\mu
 \leq \mu(B_{\rho + \delta \rho} \setminus \ball).
$$
In order to estimate the second term $Y$ we use the decay of correlations and
have to approximate $\ind_{S_1^{N-j-p}=q}$ by a function which is constant on local stable leaves.
For that purpose put
$$
\mathcal{S}_n
=\bigcup_{\substack{\gamma^s\\T^n\gamma^s\subset B_\rho}}T^n\gamma^s,
\hspace{6mm}
\partial\mathcal{S}_n
=\bigcup_{\substack{\gamma^s\\T^n\gamma^s\cap B_\rho\not=\varnothing}}T^n\gamma^s
$$
and
$$
\mathscr{S}_p^{N-j}=\bigcup_{n=p}^{N-j}\mathcal{S}_n,
\hspace{6mm}
\partial\mathscr{S}_p^{n-j}=\bigcup_{n=p}^{N-j}\partial\mathcal{S}_n.
$$
The set
$$
\mathscr{S}_p^{N-j}(q)=\{S_p^{N-j}=q\}\cap\mathscr{S}_p^{N-j}
$$
is then a union of local stable leaves. This follows from the fact that by construction
$T^n y\in B_\rho$ if and only if $T^n\gamma^s(y)\subset B_\rho$.
We also have
$\{S_p^{N-j}=q\}\subset\tilde{\mathscr{S}}_p^{N-j}(q)$
where the set $\tilde{\mathscr{S}}_p^{N-j}(q)=\mathscr{S}_p^{N-j}(q)\cup\partial\mathscr{S}_p^{N-j}$
is a union of local stable leaves.

Denote by  $\psi_p^{N-j}$ the characteristic function of $\mathscr{S}_p^{N-j}(q)$
and by $\tilde\psi_p^{N-j}$ the characteristic function of
$\tilde{\mathscr{S}}_p^{N-j}(q)$. Then $\psi_p^{N-j}$ and $\tilde\psi_p^{N-j}$
are constant on local stable leaves and satisfy
$$
\psi_p^{N-j}\le\ind_{S_p^{N-j}=q}\le\tilde\psi_p^{N-j}.
$$
Since $\{y:\psi_p^{N-j}(y)\not=\tilde\psi_p^{N-j}(y)\}\subset\partial\mathscr{S}_p^{N-j}$
we need to estimate the measure of $\partial\mathscr{S}_p^{N-j}$.

For integers $n$ and $s$ let $\tilde\Lambda_i$ be as before, then by Lemma~\ref{2tallTowersEst}
for $\mathsf{x}\not\in\mathcal{D}_{n,s}$
we have
$$
\sum_{i}\sum_{j=0}^{R_i-1}m(T^{-j}\mathcal{B}\cap(\Lambda_i\setminus\tilde\Lambda_i))
\le\sqrt{n+2}\,\Omega(s)\mu(B_{\rho+\alpha^l}),
$$
where $\mathcal{B}=B_{\rho+\alpha^l}\setminus B_{\rho-\alpha^l}$.
For points $y\in\Lambda\setminus\tilde\Lambda$ we let $l$ be so that $R^l(y)\le n<R^{l+1}(y)$
then we get $l\ge n/s\ge n^{\beta}$ where we choose $s(n)=\floor{n^{1-\beta}}$ and  $\beta<1$ so
that $\sum_n^\infty g(n^\beta)^{-1}<\infty$ in accordance with Assumption~(A4). By the contraction property
  $\mbox{diam}(T^n\gamma^s(y))\le\alpha^{n/s}\le \alpha^{n^{\beta}}$ for all
  $y\in\Lambda\setminus\tilde\Lambda$. Consequently
  $$
  \bigcup_{\substack{\gamma^s\subset\Lambda\setminus\tilde\Lambda\\T^n\gamma^s\subset B_\rho}}T^n\gamma^s
  \subset B_{\rho+\alpha^{l}}\setminus B_{\rho-\alpha^{l}}
  $$
and therefore
\begin{eqnarray*}
\mu(\partial\mathscr{S}_p^{N-j})
&\le&\mu\left(\bigcup_{n=p}^{N-j}T^{-n}\left(B_{\rho+\alpha^l}\setminus B_{\rho-\alpha^l}\right)\right)\\
&\le&\sum_{n=p}^{N-j}\mu(B_{\rho+\alpha^{l}}\setminus B_{\rho-\alpha^{l}})\\
&\le& \sum_{n=p}^\infty\mu(B_\rho)\frac{1}{g(w)\abs{\log\rho}^a}\\
&\le&c_1\mu(B_\rho)\frac1{\abs{\log\rho}^a}
\end{eqnarray*}
where  we used $w(n)=n^{\beta}\frac{\log\alpha}{\log\rho}$. If we split $p=p'+p''$
then we can estimate as follows:
\begin{align*}
Y&=\left|  \int_M \phi \; T^{-p'}(\ind_{S_{p''}^{N-j-p'}=q} ) \, d\mu
- \int_M \phi \, d\mu \, \int_M \ind_{S_1^{N-j-p}=q} \, d\mu\right|
\hspace{-9cm}\\
&\le \varphi_{p'}\|\phi\|_{Lip}\|\ind_{\tilde{\mathscr{S}}_{p''}^{N-j-p'}}\|_{\mathscr{L}^\infty}
 +2\sum_{n=p''}^\infty\sum_{i}\sum_{j=0}^{R_i-1}m(T^{-j}(B_{\rho+\alpha^l}\setminus B_{\rho-\alpha^l})\cap(\Lambda_i\setminus\tilde\Lambda_i))+2\mu(\partial\mathscr{S}_{p''}^{N-j})
  \end{align*}
where the triple sum on the RHS is by Lemma~\ref{2tallTowersEst} bounded by
$$
2\sum_{n=p''}^\infty\sqrt{n+2}\,\Omega(s)\mu(B_{\rho+\alpha^l})
 \leq c_2\mu(B_\rho)\sum_{n=p''}^\infty n^{\frac12-(1-\beta)\frac{\lambda-1}2}
 \leq c_3\mu(B_\rho)p^{\frac32-(1-\beta)\frac{\lambda-1}2}
$$
assuming $\frac32-(1-\beta)\frac{\lambda-1}2<0$ where in the last estimate we put $p'=p/2$.
Now let $\delta \rho = \rho^w$ where $w \in (1, \xi)$ is chosen in accordance with Assumption~(A4)
(this is possible since $\xi = {\varsigma}(\lambda - 2)>1$). Hence
\begin{align*}
\mu(\ball \cap T^{-p} \{S_{1}^{N-j-p}=q\}) - \mu(\ball) \, \mu(\{S_{1}^{N-j-p}=q\}) \hspace{-5cm}\\
&\leq \varphi_{p/2} / \delta \rho + \mu(\ball \setminus B_{\rho - \delta \rho})+c_3\mu(B_\rho)\left(p^{\frac32-(1-\beta)\frac{\lambda-1}2}+\abs{\log\rho}^{-a}\right)\\
&\leq \varphi_{p/2}\rho^{-w}+c_4\,\mu(B_\rho)\left(p^{\frac32-(1-\beta)\frac{\lambda-1}2}+\abs{\log\rho}^{-a}\right)
\end{align*}

In the same way we obtain a lower estimate. Since $\mathcal{D}_{n,n^{1-\beta}}\subset\mathcal{X}_\rho$
for $n=J,J+1,\dots$
we conclude that for $\mathsf{x}\not\in\mathcal{X}_\rho$ one has:
\begin{equation} \label{R1est}
\mathcal{R}_1
\leq \varphi_{p/2} \rho^{-w}+c_4\,\mu(B_\rho)\left(p^{\frac32-(1-\beta)\frac{\lambda-1}2}+\abs{\log\rho}^{-a}\right).
\end{equation}

\subsection{Estimating the individual terms of $\mathcal{R}_2$ (for $n$ fixed)}\label{R2.fixed.n}

We will estimate the measure of each of the summands comprising $\mathcal{R}_2$ individually with the help of the Young tower. Fix $n$ and for the sake of simplicity we will denote
$\ball \cap T^{-n} \ball$ by $\mathcal{B}_n$. Then
\begin{equation} \label{R2 summand}
\mu(\ball \cap T^{-n} \ball) = \mu(\mathcal{B}_n)
= \sum_{i=1}^{\infty} \sum_{j=0}^{R_i-1} m(T^{-j}\mathcal{B}_n \cap \Lambda_i).
\end{equation}
With $s=\floor{n^\eta}$ let $\tilde\Lambda_i$ be as in
Section~\ref{approx_ball}~(II), then
\begin{eqnarray}
\mu(\mathcal{B}_n)
&=& \sum_{i} \sum_{j=0}^{R_i-1} m(T^{-j}\mathcal{B}_n \cap \tilde{\Lambda}_i)
 + \sum_{i} \sum_{j=0}^{R_i-1} m(T^{-j}\mathcal{B}_n \cap (\Lambda_i \setminus \tilde{\Lambda}_i))\notag\\
&\leq& \sum_{i} \sum_{j=0}^{R_i-1} m(T^{-j}\mathcal{B}_n \cap \tilde{\Lambda}_i)
+ \sqrt{n+2}\,\Omega(s) \mu(\ball)\label{R2 summand ctd1}
\end{eqnarray}
using Lemma~\ref{2tallTowersEst} for the second
 term on the RHS to the complement of the set
$ \mathcal{D}_{n,s}$.
Since ($\tilde\zeta$ as in Section~\ref{approx_ball})
$$
m(T^{-j}\mathcal{B}_n \cap \tilde{\zeta}_{\tau}) \;
\leq \sum_{\substack{\tau' \, | \, \exists \, \tau \subset \tau' }}
m(T^{-j} \mathcal{B}_n \cap \tilde{\zeta}_{\tau'})
= \hspace{-0.4cm} \sum_{\substack{\tau' \, | \, \exists \, \tau \subset \tau', \, \tau\in I  \\ T^{-j}\ball \, \cap \, {\zeta}_{\tau'} \ne \varnothing}} \hspace{-0.4cm}
m(T^{-j}\mathcal{B}_n \cap \tilde{\zeta}_{\tau'})
$$
we get, since by~\eqref{long.cylinders} $\bigcup_i\tilde\Lambda_i\subset\bigcup_{\tau\in I}\tilde\zeta_\tau$,
 for each of the summands in the principal term on the RHS of~\eqref{R2 summand ctd1}
\begin{eqnarray*}
m_{\gamma^u}(T^{-j}\mathcal{B}_n \cap \tilde{\Lambda}_i)
&\le& \sum_{\tau \in I_{i, j, n}} m_{\gamma^u}(T^{-j}\mathcal{B}_n \cap \tilde{\zeta}_{\tau})\\
&\le&\sum_{\substack{\tau' \, | \, \exists \, \tau \subset \tau', \tau \in I\\
T^{-j}\ball \, \cap \, {\zeta}_{\tau'} \ne \varnothing}}
\hspace{-0.4cm} m_{\gamma^u}(T^{-j}\mathcal{B}_n \cap \tilde{\zeta}_{\tau'}) \;  \\
&=&\sum_{\substack{\tau' \, | \, \exists \, \tau \subset \tau', \tau \in I \\
T^{-j}\ball \, \cap \, {\zeta}_{\tau'} \ne \varnothing}}
\hspace{-0.4cm} \frac{m_{\gamma^u}(T^{-j}\mathcal{B}_n \cap \tilde{\zeta}_{\tau'})}
{m_{\gamma^u}(\tilde{\zeta}_{\tau'})} \,
m_{\gamma^u}(\tilde{\zeta}_{\tau'})\\
& \leq& C_6 \,\mu(\ball) \hspace{-2mm} \sum_{\substack{\tau' \, | \, \exists \, \tau \subset \tau', \tau \in I \\ T^{-j}\ball \, \cap \, {\zeta}_{\tau'} \ne \varnothing}} \hspace{-0.4cm} 
m_{\gamma^u}(\tilde{\zeta}_{\tau'})
\end{eqnarray*}
where we used Lemma~\ref{case1} in the last step.
From Lemma~\ref{sumCylind} we obtain a bound for the sum of measures of the cylinders, whence
$$
\sum_{\tau \in I} m(T^{-j}\mathcal{B}_n \cap \tilde{\zeta}_{\tau}) \leq C_6 \,\mu(\ball)
 \, m(\gamma^s(T^{-j} (B_{\rho + C_7 \, \alpha^{n/s}})) \cap \Lambda_i)
$$
using the product structure of the measure $m$.
Note that since $B_{\rho + C_7 \, \alpha^{n/s}}(\mathsf{x})
 \subset B_{2\rho}(\mathsf{x}) \cup B_{2C_7 \alpha^{n/s}}(\mathsf{x})$ we obtain
on unstable leaves by Assumption~(A1)(b) that
$$
m_{\gamma^u}(\gamma^s(T^{-j} (B_{\rho + C_7 \, \alpha^{n/s}}))
 \leq m_{\gamma^u}(B_{2\rho}) + m_{\gamma^u}(B_{2C_7 \alpha^{n/s}})
\le(2\rho)^{\varsigma'}+(2C_7 \alpha^{n/s})^{\varsigma'}
$$
 using the geometric regularity (A4)(b)
provided that the radius $2C_7 \, \alpha^{n/s}$ is small enough. Since $n \geq J$ and
 $s$ depends on $n$ we can guarantee the above radius to be sufficiently small provided 
that $\rho$ is small enough. Therefore
$$
m(\gamma^s(T^{-j} (B_{\rho + C_7 \, \alpha^{n/s}})) \cap \Lambda_i)
\le c_1\left((2\rho)^{\varsigma'}+(2C_7 \alpha^{n/s})^{\varsigma'}\right)
$$ 
for some $c_1$.
Thus the first term (principal term) on the RHS of~\eqref{R2 summand ctd1} can be bounded as follows
\begin{align}
\sum_{i} \sum_{j=0}^{R_i-1} m(T^{-j}\mathcal{B}_n \cap \tilde{\Lambda}_i)
&\le \sum_{i} \sum_{j=0}^{R_i-1} \sum_{\tau \in I}
m(T^{-j}\mathcal{B}_n \cap \tilde{\zeta}_{\tau}) \notag\\
&\leq  \sum_{i} \sum_{j=0}^{R_i-1} C_6 \, \mu(\ball) \,
m(\gamma^s(T^{-j} (B_{\rho + C_7 \,\alpha^{n/s}})) \cap \Lambda_i)\notag\\
&\leq C_6'\,\mu(\ball)
\left( \rho^{\varsigma'}+( \alpha^{n/s})^{\varsigma'}\right) \label{R2 summand ctd2}
\end{align}
for a constant $C_6'$.

\subsection{Estimating $\mathcal{R}_2$} \label{est_R2}
Combining inequalities~\eqref{R2 summand ctd1} and~\eqref{R2 summand ctd2} results in
\begin{equation*}
\mu(\ball \cap T^{-n} \ball) = \mu(\mathcal{B}_n)
 \leq C_6'\,\mu(\ball)
\left( \rho^{\varsigma'}+(\alpha^{n/s})^{\varsigma'}\right) + \Omega(s)\sqrt{n+2} \mu(\ball),
\end{equation*}
provided that $n \geq J$ and the center $\mathsf{x}$ of the ball $\ball$ lies outside the set
$ \mathcal{D}_{n,s}$.

As before let $s=\lfloor n^\eta\rfloor$ where
 $\eta\in(\frac3{\lambda-1},1)$.
Summing up the $\mathcal{B}_n$ terms over $n=J,\dots,p-1$, we see that outside the set of forbidden ball centers $\mathcal{V}_\rho \cup\mathcal{X}_\rho$ we get
 with $\tilde{\alpha} = \alpha^{\varsigma'}$
\begin{equation} \label{R2 summand ctd3}
\mathcal{R}_2 = \sum_{n=J}^{p-1} \mu(\ball \cap T^{-n} \ball)
 \leq\mu(\ball) \sum_{n=J}^{p-1} \left(C_6' 
\left( \rho^{\varsigma'}+\tilde\alpha^{n/s}\right) + 2\sqrt{n}\,\Omega(s) \right).
\end{equation}
For $\rho$ small and by Lemma~\ref{omegaEst}
$\Omega(s) \leq C_5 s^{-\theta}$ where $\theta = \frac{\lambda-1}{2}$ we obtain
\begin{eqnarray*}
 \mathcal{R}_2
 &\leq& c_1\mu(\ball)\left( p\,\rho^{\varsigma'}
 +\sum_{n=J}^{p-1} \tilde{\alpha}^{\, n^{1-\eta}}
 +  \sum_{n=J}^{\infty} n^{\frac{1}{2} - \eta \theta}\right)\\
 & \leq &c_2\mu(\ball)\left(p \,\rho^{\varsigma'}
+\tilde{\alpha}^{\frac{1}{2} J^{1-\eta}} + J^{\frac{3}{2} - \eta \theta}\right)
\end{eqnarray*}
since
$\sum_{n=J}^{\infty}\tilde{\alpha}^{\, n^{1-\eta}} \leq c_3 \, \tilde{\alpha}^{\frac{1}{2} J^{1-\eta}} $
for some $c_3$ (and $\rho$ small enough).
As $\tilde{\alpha}^{\frac{1}{2} J^{1-\eta}} \leq J^{- \sigma}$, $\sigma=\eta\theta-\frac32$, for  $\rho$ small, we get
$$
\mathcal{R}_2 \leq C_9 \, \mu(\ball) \left(p\, \rho^{\varsigma'} + J^{-\sigma} \right)
$$
for some  $C_9$.
Note that the above it true provided $\rho$ is sufficiently small and the center $\mathsf{x}$
of the ball $B_\rho$ is not in
$\mathcal{X}_\rho\cup\mathcal{V}_{\rho}$.

\subsection{Estimate of the total error} \label{optimization}
Now we want to bound all of the error components from inequality \eqref{errorUSE} with terms of the order of $J^{-\sigma}$ or a negative power of $\abs{\log \rho}$, where $\sigma=\eta\frac{\lambda-1}2-\frac32<\frac{\lambda-4}2$.
To that end we choose the length of the gap $p$ to be
$$
p = \floor{J^{-\sigma} \rho^{-\varsigma'}},
$$
and estimate the three error terms on the RHS of~\eqref{errorUSE} separately.\\
{\bf (I)} The last summand is immediately estimated as
$$
p \mu(\ball)\le p\,c_1\rho^{\varsigma'} \leq  J^{-\sigma}.
$$
{\bf (II)} For the term involving $\mathcal{R}_1$ we obtain
$$
N \mathcal{R}_1 \leq \frac{t}{\mu(\ball)} \, \biggl( \varphi_{p/2} \rho^{-w}
+c_2\,\mu(B_\rho)\left(p^{\frac32-(1-\beta)\frac{\lambda-1}2}+\abs{\log\rho}^{-a}\right) \biggr).
$$
Let  $w<\xi$ and $\varsigma'<\varsigma$, $\hat\varsigma''>\hat\varsigma$ so that 
$\varsigma'(\lambda-1)-\hat\varsigma''-w>0$. Since $\mu(\ball)\ge\rho^{\hat\varsigma''}$
for $\rho$ small enough we  get for the first term on the RHS for $\rho$ small:
$$
\frac{\varphi_{p/2} \rho^{-w}}{\mu(\ball)}
\le c_3\frac{p^{1-\lambda}}{\mu(\ball) \, \rho^w}
\leq c_4 \, \rho^{\varsigma'(\lambda-1)-\hat\varsigma''-w} \, J^{\sigma(\lambda-1)}
\leq J^{-\sigma}
$$
and so (with some $c_5, c_6$) since by Assumption~(A4)(a)
 $\frac32-(1-\beta)\frac{\lambda-1}2<0$ we conclude
$$
N \mathcal{R}_1 \leq c_5\left(J^{-2\sigma} +\abs{\log \rho}^{-a}\right)
\le c_6\abs{\log\rho}^{-\min\{a,\sigma\}}.
$$
{\bf (III)} Utilizing the estimate from  Section~\ref{est_R2} and using the fact that
$N=\lfloor t/\mu(B_\rho)\rfloor$ yield
$$
N \mathcal{R}_2
 \leq t \, C_9 \biggl(p\,\rho^{\varsigma'}+ J^{-\sigma} \biggr)
 \le c_7t\abs{\log\rho}^{-\sigma}
$$
for some $c_7$.

\vspace{0.5cm}
\noindent Combining the results of estimates~(I), (II) and~(III)  above we obtain for $\rho$
sufficiently small the RHS of~\eqref{errorUSE} as follows ($\mathsf{x}\not\in\mathcal{X}_\rho$)
$$
N(\mathcal{R}_1 + \mathcal{R}_2) + p \mu(\ball)
\leq \frac{c_6}{\abs{\log\rho}^{\min\{a,\sigma\}}} +\frac{c_7t}{\abs{\log\rho}^{\sigma}}+ \frac1{J^\sigma}
\le \frac{c_8(1+t)}{\abs{\log\rho}^\kappa}.
$$
for some $c_8$, where  $\kappa = \min\{ \sigma, a\}$ is positive as long as $v<\sigma$.
This now concludes the proof of Theorem~1 as it shows that for $\rho$ small enough and for any
$\mathsf{x} \notin \mathcal{X}_\rho$ we have for $k \leq N$
$$
\Abs{\mathbb{P}(S = k) - \frac{t^k}{k!} \, e^{-t}} \, \leq \, C(1+t^2)\abs{\log \rho}^{-\kappa}.
$$
By~\eqref{yay!_k>N} this estimate also extends to  $k>N$.
By Proposition~\ref{forbidden.set} the size of the forbidden set is then
$\mu(\mathcal{X}_\rho)=\mathcal{O}(\abs{\log\rho}^{-\kappa'})$
where $\kappa'=\hat\sigma$.
The proof of Theorem~1 is thus complete.
\qed

\vspace{3mm}

\noindent {\bf Remark.} In the case when $a\ge\sigma$ and $\beta\le\frac12$ then
$\kappa=\kappa'=\frac{\lambda-4}2$.

\section{Very Short Returns and Proof of Theorem~2}\label{VeryShortReturns}

In this section we prove Theorem~2 which is a direct consequence of Theorem~1 and
Proposition~\ref{prop.short.returns} below,
which estimates the measure of the set with short return times.
We first state the precise assumptions. Again we use the Young tower construction.

\subsection{Assumptions} \label{ass_T2}

Let $(M, T)$ be a dynamical system equipped with a metric $d$. Assume that the map $T: M \maps M$ is a $C^2$-diffeomorphism. As at the start of the paper the set $\mathcal{V}_\rho \subset M$ is given by
\begin{equation*}
\mathcal{V}_\rho = \{\mathsf{x} \in M: \ball(\mathsf{x}) \cap T^{n}\ball(\mathsf{x}) \ne \varnothing \text{ for some } 1 \leq n < J\},
\end{equation*}
where $J = \floor{\mathfrak{a} \, \abs{\log \rho}}$ and $\mathfrak{a} = (4 \log A)^{-1}$ with
$$
A = \norm{DT}_{\mathscr{L}^\infty} + \norm{DT^{-1}}_{\mathscr{L}^\infty}
$$
($A\ge2$).
Suppose the system can be modeled by a Young tower possessing a reference measure $m$ and that the greatest common divisor of the return times $R_i$ is equal to one.
 Let $\mu$ be the SRB measure associated to the system. We require the following in Proposition~\ref{prop.short.returns}:

\vspace{3mm}


\noindent (B1) {\em Regularity of the Jacobian and the metric on the Tower}\\
The same as Assumption~(A1)

\vspace{3mm}
\noindent (B2) {\em Polynomial Decay of the Tail} \\
There exist constants $C_1$ and $\lambda>9$ such that
$$
\hat{m}_{\gamma^u}(R>k) \leq C_1 \, k^{-\lambda}
$$
for unstable leaves $\gamma^u$.

\vspace{3mm}

\noindent (B3) {\em Geometric regularity of the measure}\\
Let $\varsigma$ be the dimension of the measure $\mu$. Suppose that the positive constant
 $\varsigma' < \varsigma$ is fixed. There exists a constant $C_2>0$ and  a set
 $\mathcal{E}_\rho\subset M$  satisfying $\mu(\mathcal{E}_\rho)\le C_2\abs{\log\rho}^{-\lambda/2}$ such that
$$
\qquad m_{\gamma^u}(\ball(\mathsf{x})) \leq C_2 \, \rho^{\varsigma'}
$$
for all $\mathsf{x}\not\in\mathcal{E}_\rho$ and $\rho$ small.

\vspace{3mm}

\noindent Note that in Proposition~\ref{prop.short.returns} we don't require the measure to be $\xi$-regular.

\subsection{Estimate on the measure of $\mathcal{V}_\rho$} \label{shortsMain}

Before we prove the main result of this section we shall present a lemma which
will be needed in the proof of Proposition~\ref{prop.short.returns}.

\begin{lem} \label{rate_diam}
Let $\hat{C}_1>0$ be a constant and $\hat\alpha\in(0,1)$. Then for all sufficiently small $\rho$
$$
4 \, A^{n+b} \rho + A^{b+j} \hat{C}_1 \hat{\alpha}^{\, \abs{\log \rho}^{3/4}}
\le e^{-\abs{\log \rho}^{1/4}}.
$$
for any $n\le J$ and $b,j\le J^\frac14$ ($J = \floor{\mathfrak{a} \, \abs{\log \rho}}$).
\end{lem}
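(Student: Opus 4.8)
The plan is to show that both terms on the left‑hand side are, for $\rho$ small, bounded above by $\tfrac12 e^{-\abs{\log\rho}^{1/4}}$, so that their sum does not exceed the right‑hand side. Write $L = \abs{\log\rho}$, so that $\rho = e^{-L}$ and $J = \floor{\mathfrak a L}$. Since $A \ge 2$ and $n \le J \le \mathfrak a L$, and $b \le J^{1/4} \le (\mathfrak a L)^{1/4}$, the exponent $n+b$ in the first term satisfies $n+b \le \mathfrak a L + (\mathfrak a L)^{1/4} \le 2\mathfrak a L$ for $L$ large. Hence
\[
4\,A^{n+b}\rho \;\le\; 4\,A^{2\mathfrak a L} e^{-L} \;=\; 4\,\exp\bigl((2\mathfrak a \log A - 1)L\bigr).
\]
The key observation is that $\mathfrak a = (4\log A)^{-1}$, so $2\mathfrak a \log A = \tfrac12$ and the exponent is $-\tfrac12 L$. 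Thus $4\,A^{n+b}\rho \le 4\,e^{-L/2}$, which is $\le \tfrac12 e^{-L^{1/4}}$ for all $\rho$ sufficiently small, because $L/2$ eventually dominates $L^{1/4} + \log 8$.

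For the second term, the factor $\hat\alpha^{L^{3/4}}$ is the decisive one: writing $\hat\alpha = e^{-\delta}$ with $\delta = -\log\hat\alpha > 0$, we have $\hat C_1\hat\alpha^{L^{3/4}} = \hat C_1 e^{-\delta L^{3/4}}$. The prefactor $A^{b+j}$ is subexponential in $L$ of lower order: since $b,j \le J^{1/4} \le (\mathfrak a L)^{1/4}$, we get $b+j \le 2(\mathfrak a L)^{1/4}$, so $A^{b+j} \le \exp\bigl(2(\mathfrak a L)^{1/4}\log A\bigr)$, which grows only like $L^{1/4}$ in the exponent. Therefore
\[
A^{b+j}\hat C_1\hat\alpha^{L^{3/4}}
\;\le\; \hat C_1 \exp\bigl(2(\mathfrak a)^{1/4}(\log A)\,L^{1/4} - \delta L^{3/4}\bigr),
\]
and since $L^{3/4}$ dominates $L^{1/4}$, this is $\le \tfrac12 e^{-L^{1/4}}$ for $\rho$ small enough (indeed it is far smaller, being bounded by $e^{-cL^{3/4}}$ for any $c < \delta$).

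Adding the two bounds gives $4A^{n+b}\rho + A^{b+j}\hat C_1\hat\alpha^{L^{3/4}} \le \tfrac12 e^{-L^{1/4}} + \tfrac12 e^{-L^{1/4}} = e^{-L^{1/4}} = e^{-\abs{\log\rho}^{1/4}}$, as claimed. The only point requiring care — and the one genuine "obstacle," though it is really just bookkeeping — is tracking that all the exponents $n$, $b$, $j$ are controlled by the single scale $L = \abs{\log\rho}$ with the stated fractional powers, and that the specific choice $\mathfrak a = (4\log A)^{-1}$ is exactly what turns the exponent $2\mathfrak a\log A - 1$ in the first term into the negative number $-\tfrac12$; any larger value of $\mathfrak a$ would defeat the estimate. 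Everything else is an elementary comparison of the growth rates $L$, $L^{3/4}$, $L^{1/4}$ as $\rho \to 0$.
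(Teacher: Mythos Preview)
Your proof is correct and follows essentially the same route as the paper's own argument: you bound each of the two terms separately by $\tfrac12 e^{-\abs{\log\rho}^{1/4}}$, using for the first term the crucial identity $2\mathfrak{a}\log A=\tfrac12$ (from $\mathfrak{a}=(4\log A)^{-1}$) to get $4A^{n+b}\rho\le 4\rho^{1/2}$, and for the second term the domination of the $L^{1/4}$ growth of $A^{b+j}$ by the $L^{3/4}$ decay of $\hat\alpha^{\abs{\log\rho}^{3/4}}$. Your write-up is in fact slightly more explicit about the constants and the exponent comparisons than the paper's, but the structure and ideas are identical.
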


\begin{proof}
By assumption
$$
A^{n+b} \rho \leq A^{2J}  \rho
\leq 2^{-1}A^{2 \mathfrak{a} \abs{\log \rho}}  \rho = 2^{-1}{\rho}^{1-2 \mathfrak{a} \log A} =2^{-1} {\rho}^{1/2}
$$
and also
$$
A^{b+j}\hat\alpha^{\abs{\log\rho}^\frac34}\le A^{1\mathfrak{a}\abs{\log\rho}^\frac14}\hat\alpha^{\abs{\log\rho}^\frac34}
\le(2\hat{C}_1)^{-1}e^{-\abs{\log\rho}^\frac14}.
$$
Since $\rho^\frac12=e^{-\frac12\abs{\log\rho}}$ the statement of the lemma follows for $\rho$ small enough.
\end{proof}

\vspace{3mm}

\noindent Now we can show that the set of centres where small balls have very short returns is small.
To be precise we have the following result:

\begin{prop}\label{prop.short.returns}
There exist constants $C_{10}>0$ such that for all $\rho$ small enough
$$
\mu(\mathcal{V}_\rho)\le\frac{C_{10}}{\abs{\log\rho}^{\frac{\lambda-9}4}}.
$$
\end{prop}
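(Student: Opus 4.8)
The plan is to bound $\mu(\mathcal{V}_\rho)$ by a sum over the ``short'' return times $n \in [1, J)$ of the measure of the set $\mathcal{V}_\rho^{(n)} = \{\mathsf{x} : \ball(\mathsf{x}) \cap T^n\ball(\mathsf{x}) \ne \varnothing\}$, and for each fixed $n$ to estimate $\mu(\mathcal{V}_\rho^{(n)})$ using the Young tower decomposition $\mu(\cdot) = \sum_i \sum_{j=0}^{R_i-1} m(T^{-j}(\cdot)\cap\Lambda_i)$ together with the geometric regularity (B3) on unstable leaves. The key geometric observation is that if $\ball(\mathsf{x}) \cap T^n\ball(\mathsf{x}) \ne \varnothing$ then $\mathsf{x}$ and $T^n\mathsf{x}$ are within distance $2\rho$ of each other, so that after pushing down to the base $\Lambda_i$ and applying the return map a bounded number of times, the relevant set is contained in a ball whose radius is controlled by Lemma~\ref{rate_diam}: the diffeomorphism expands distances by at most $A$ per iterate, and there are at most $J \le \mathfrak{a}\abs{\log\rho}$ iterates, while $\mathfrak{a} = (4\log A)^{-1}$ is chosen precisely so that $A^{2J}\rho \le \rho^{1/2}$ stays small.

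The main steps, in order, would be: (1) split $\mathcal{V}_\rho = \bigcup_{n=1}^{J-1}\mathcal{V}_\rho^{(n)}$ and reduce to bounding each $\mu(\mathcal{V}_\rho^{(n)})$; (2) further split $M$ according to whether the orbit segment of length $n$ starting from $\mathsf{x}$ stays in ``short'' beams or not — for the part that visits tall beams (say $R_i > s$ for an appropriate $s = s(n)$) use Lemma~\ref{2tallTowersEst} (with $\mathcal B = B_\rho$, or a suitable modification) to bound the contribution by $\sqrt{n+2}\,\Omega(s)\mu(M)$ after removing a set $\mathcal{D}_{n,s}$, and for the short-beam part work on cylinders $\tilde\zeta_\tau$ as in Section~\ref{approx_ball}; (3) on a fixed cylinder $\tilde\zeta_{\tau'}$ with $s(\zeta_{\tau'})$ large, use Lemma~\ref{cylinderDiameterLem} plus the $C^2$-distortion/expansion estimate to show that the condition $\ball(\mathsf{x})\cap T^n\ball(\mathsf{x})\ne\varnothing$ forces $T^j\mathsf{x}$ (for the relevant descent time $j$) into a neighbourhood of a local stable leaf of radius $\rho' := e^{-\abs{\log\rho}^{1/4}}$ by Lemma~\ref{rate_diam}, so the $m_{\gamma^u}$-measure of the relevant slice is $\lesssim (\rho')^{\varsigma'}$ by (B3); (4) sum over $j$, over cylinders, over $i$, and over $n$, and optimise the choice of $s(n)$ (a power $n^{\eta}$ or $n^{1-\beta}$) to make the tall-beam error $\sum_{n<J}\sqrt n\,\Omega(n^\eta) \lesssim J^{1/2 - \eta(\lambda-1)/2}$ and the forbidden set $\bigcup_n\mathcal{D}_{n,s(n)}\cup\mathcal{E}_\rho$ both of order $\abs{\log\rho}^{-(\lambda-9)/4}$; (5) collect the bounds, observing that the short-beam main term is super-polynomially small in $\abs{\log\rho}$ (because of the $(\rho')^{\varsigma'} = e^{-\varsigma'\abs{\log\rho}^{1/4}}$ factor) and hence dominated by the polynomial error, to conclude $\mu(\mathcal{V}_\rho) \le C_{10}\abs{\log\rho}^{-(\lambda-9)/4}$.

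I expect the main obstacle to be step~(3): carefully tracking how the self-intersection condition $\ball(\mathsf{x})\cap T^n\ball(\mathsf{x})\ne\varnothing$ translates, after descending to the tower base and applying $\hat T$ a number of times comparable to $n/s$, into a genuine smallness constraint of size $\rho'$ on an unstable-leaf slice. One has to handle the interplay of stable and unstable directions (the ball shrinks along stable leaves under $T^n$ but the point $T^n\mathsf x$ can move within a stable leaf, which is why the target is a $\gamma^s$-neighbourhood rather than a ball), keep the number of expanding iterates bounded by $J$ plus a few ``descent'' iterates $b, j \le J^{1/4}$ so that Lemma~\ref{rate_diam} applies with its $A^{n+b}$ and $A^{b+j}$ factors, and ensure the distortion constants stay uniform. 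The exponent $\lambda > 9$ enters through requiring $\Omega(n^\eta)$ to be summable against $\sqrt n$ with enough room to also absorb the exponent loss in the forbidden-set estimate $\mu(\mathcal{E}_\rho)\le C_2\abs{\log\rho}^{-\lambda/2}$ and in the optimisation yielding $(\lambda-9)/4$; checking that this optimisation is consistent is the second delicate point, though it is essentially bookkeeping once step~(3) is in place. I would adapt the argument of~\cite{CC13} Lemma~4.1 for the overall structure, replacing their exponential-decay input with the polynomial $\Omega$-estimate of Lemma~\ref{omegaEst}.
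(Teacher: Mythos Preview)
Your overall strategy matches the paper's, and you correctly identify that the main work lies in step~(3) and that the short-beam principal term will be super-polynomially small. However, there is a genuine gap: your plan treats all $n\in[1,J)$ on the same footing, and this fails for small $n$.

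The diameter estimate behind Lemma~\ref{rate_diam} has two pieces: the $4A^{n+b}\rho$ term, which is small because $n\le J$ and $\mathfrak{a}=(4\log A)^{-1}$; and the $A^{b+j}\hat{C}_1\hat\alpha^{|\log\rho|^{3/4}}$ term, which is small only because the cylinder depth $l$ satisfies $l\ge n/s-1$ and one arranges $n/s\gtrsim |\log\rho|^{3/4}$. With $s=J^{1/4}$ this forces $n\gtrsim \mathfrak{b}J$; with your $s=n^\eta$ you would need $n^{1-\eta}$ large, which again fails for $n=O(1)$. In either case the argument gives nothing for the first few values of $n$. The paper (following \cite{CC13}~Lemma~4.1) handles this by splitting $\mathcal{V}_\rho=\mathcal{V}_\rho^1\cup\mathcal{V}_\rho^2$ at $n\approx\mathfrak{b}J$ and invoking the doubling inclusion $\mathcal{N}_\rho(n)\subset\mathcal{N}_{s_p\rho}(2^p n)$ (\cite{CC13}~Lemma~B.3) to boost small $n$ into the range $[\mathfrak{b}J,2\mathfrak{b}J]$, after which the large-$n$ argument applies verbatim with a slightly larger radius $\rho'$. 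You allude to adapting \cite{CC13}~Lemma~4.1 but do not isolate this step, and nothing in your numbered outline covers it.

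A secondary point: Lemma~\ref{2tallTowersEst} is the wrong tool for the tall-beam error here. That lemma controls $\sum_i\sum_j m(T^{-j}\mathcal{B}\cap(\Lambda_i\setminus\tilde\Lambda_i))$ \emph{relative to} $\mu(B_\rho)$ for $\mathcal{B}\subset B_\rho$, after excising $\mathcal{D}_{n,s}$; but $\mathcal{N}_\rho(n)$ is not contained in any small ball, so the lemma does not apply as stated, and your claimed bound $\sqrt{n+2}\,\Omega(s)\mu(M)$ is not what it yields. The paper simply bounds $\sum_i\sum_j m(\Lambda_i\setminus\tilde\Lambda_i)\le s(n+1)m(R>s)+\Omega(s)^2\lesssim nJ^{-(\lambda-1)/4}$ directly (with fixed $s=J^{1/4}$), and then sums over $n\le J$ to get $J^{-(\lambda-9)/4}$ --- no excision is needed. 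Your choice $s=n^\eta$ would make this sum $O(1)$ rather than $o(1)$, because the small-$n$ terms do not decay; this is another symptom of the same small-$n$ issue.
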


\begin{proof} We largely follow the proof of Lemma~4.1 of~\cite{CC13}.
Let us note that since $T$ is a diffeomorphism one has
$$
\ball(\mathsf{x}) \cap T^{n}\ball(\mathsf{x}) \ne \varnothing \qquad \iff \qquad \ball(\mathsf{x}) \cap T^{-n}\ball(\mathsf{x}) \ne \varnothing.
$$
We partition $\mathcal{V}_\rho$ into level sets $\mathcal{N}_{\rho}(n)$ as follows
$$
\mathcal{V}_\rho = \{\mathsf{x} \in M: \ball(\mathsf{x}) \cap T^{-n}\ball(\mathsf{x}) \ne \varnothing \text{ for some } 1 \leq n < J\}
 = \bigcup_{n=1}^{J-1} \mathcal{N}_{\rho}(n)
 $$
 where
 $$
  \mathcal{N}_{\rho}(n) = \{\mathsf{x} \in M: \ball(\mathsf{x}) \cap T^{-n}\ball(\mathsf{x}) \ne \varnothing \}.
$$
The above union is split into two collections $\mathcal{V}_\rho^1 $ and $\mathcal{V}_\rho^2$, where
\begin{equation*}
\mathcal{V}_\rho^1 = \bigcup_{n=1}^{\floor{\mathfrak{b} J}} \mathcal{N}_{\rho}(n) \quad \text{and} \quad \mathcal{V}_\rho^2 = \bigcup_{n=\roof{\mathfrak{b} J}}^{J} \mathcal{N}_{\rho}(n).
\end{equation*}
aand where the constant $\mathfrak{b} \in (0,1)$ will be chosen below.
In order to find the measure of the total set we will estimate the measures of the two parts separately.

\vspace{3mm}

\noindent {\bf (I) Estimate of $\mathcal{V}_\rho^2$}

\vspace{1mm}

\noindent We will derive a uniform estimate for the measure of the level sets $\mathcal{N}_{\rho}(n)$ when $n > \mathfrak{b} J$
\begin{align}
\mu(\mathcal{N}_{\rho}(n)) & = \sum_{i=1}^{\infty} \sum_{j=0}^{R_i-1} m(T^{-(n+j)}\mathcal{N}_{\rho}(n) \cap \Lambda_i)\notag \\
& = \sum_{i} \sum_{j=0}^{R_i-1}
m(T^{-(n+j)}\mathcal{N}_{\rho}(n) \cap \tilde{\Lambda}_i)+ \sum_{i} \sum_{j=0}^{R_i-1} m(T^{-(n+j)}\mathcal{N}_{\rho}(n) \cap (\Lambda_i \setminus \tilde{\Lambda}_i)) \label{level_set_N}
\end{align}
as $\mu(\mathcal{N}_{\rho}(n)) = \mu(T^{-n}\mathcal{N}_{\rho}(n))$ and where here we put $s=J^\frac14$
which means
$$
\tilde{\Lambda}_i = \{x \in \Lambda_i: \forall l \leq n, \; R(\hat{T}^{l} x) \leq J^{\frac{1}{4}} \}.
$$
On the RHS the first term is the principal term, and the other two terms will be treated as error terms. Let us
first estimate the error term. Note that like in the proof of Lemma~\ref{2tallTowersEst} and assumption~(B2) the second sum on the RHS can be bounded by
\begin{align*}
 \sum_{i} \sum_{j=0}^{R_i-1} m(T^{-(n+j)}\mathcal{N}_{\rho}(n) \cap (\Lambda_i \setminus \tilde{\Lambda}_i))
&\leq \sum_{i} \sum_{j=0}^{R_i-1} m(\Lambda_i \setminus \tilde{\Lambda}_i) \\
& \leq \sum_{i} R_i \, m(\Lambda_i \setminus \tilde{\Lambda}_i) \\
&\leq \; \; J^{\frac{1}{4}} \, (n+1) \, m(R>J^{\frac{1}{4}}) \\
&  \leq \; \; c_1n J^{-\frac{\lambda-1}{4}} .
\end{align*}
As for the first sum (principal term) on the RHS in~\eqref{level_set_N}, we can decompose the beam base
$\tilde{\Lambda}_i$ into cylinder sets, as in Section~\ref{approx_ball}, with the index set $I$ defined just
as in~\eqref{index set}
$$
\tilde{\Lambda}_i \subset \bigsqcup_{\tau \in I_{i, j, n}} \hspace{-0.25cm} \tilde{\zeta}_{\tau}\\[-0.4 cm].
$$
Then
$$
m(T^{-(n+j)}\mathcal{N}_{\rho}(n) \cap \tilde{\Lambda}_i)
\le\sum_{\tau \in I} m(T^{-(n+j)}\mathcal{N}_{\rho}(n) \cap \tilde{\zeta}_{\tau}) \leq \sum_{\substack{\tau' | \exists \tau \in I \\ \tau \subset \tau'}} m(T^{-(n+j)}\mathcal{N}_{\rho}(n) \cap \tilde{\zeta}_{\tau'}).
$$
Incorporating the above estimates and decomposition into \eqref{level_set_N}, we obtain
($c_2= c_1+C_5^2$)
\begin{equation}\label{level_set_N2}
\mu(\mathcal{N}_{\rho}(n))  \leq  \sum_{i} \sum_{j=0}^{R_i-1} \sum_{\tau' | \exists \tau \in I} m(T^{-(n+j)}\mathcal{N}_{\rho}(n) \cap \tilde{\zeta}_{\tau'}) + c_2nJ^{-\frac{\lambda-1}{4}}.
\end{equation}
We will consider each of the measures $m(T^{-(n+j)}\mathcal{N}_{\rho}(n) \cap \tilde{\zeta}_{\tau'})$ separately.
Let $\tau = (i_0, \hdots, i_l)\in I_{i_0,j,n}$ and $\tau' = (i_0, \hdots, i_{l-1})$ as in Section~\ref{approx_ball}.
By distortion of the Jacobian, Lemma~\ref{Distortion}, we obtain for $\tilde\zeta_{\tau'}\not=\varnothing$
(which implies $\tilde\zeta_{\tau'}=\zeta_{\tau'}$):
\begin{align}
m_{\gamma^u}(T^{-(n+j)}\mathcal{N}_{\rho}(n) \cap \tilde{\zeta}_{\tau'})
&= \frac{m_{\gamma^u}(T^{-(n+j)}\mathcal{N}_{\rho}(n) \cap \tilde{\zeta}_{\tau'})}{m_{\gamma^u}(\tilde{\zeta}_{\tau'})} \, m_{\gamma^u}(\tilde{\zeta}_{\tau'})
\notag\\
&\leq C_3  \, \frac{m_{\hat\gamma^u}(\hat{T}^l (T^{-(n+j)}\mathcal{N}_{\rho}(n) \cap \tilde{\zeta}_{\tau'}))}{m_{\hat\gamma^u}(\Lambda)} \,
m_{\gamma^u}(\tilde{\zeta}_{\tau'}),  \label{level_summand}
\end{align}
where, as before, $\hat\gamma^u=\gamma^u(\hat{T}^lx)$ for $x\in\zeta_{\tau'}\cap\gamma^u$.
For the last line compare with~\eqref{comparison}.
We estimate the numerator by finding a bound for the diameter of the set. Let the points $x$ and $z$ in
$T^{-n}\mathcal{N}_{\rho}(n)$ be such that 
$T^{-j} x, T^{-j} z \in T^{-(n+j)}\mathcal{N}_{\rho}(n) \cap \tilde{\zeta}_{\tau'}\cap\gamma^u$
for an unstable leaf $\gamma^u$. From the from $(n,j)$-minimality of $\tau$ ($R^l  \leq n+j < R^{l+1}$) we know that
$$
\hat{T}^l = T^{n+j-b}, \quad \text{where} \quad b = n+j - R^l < R_{i_l},
$$
therefore
$$
d(\hat{T}^l T^{-j} x, \hat{T}^l T^{-j} z) = d(T^{n-b} x, T^{n-b} z).
$$
Incorporating the definition of the constant $A$
$$
d(T^{n-b} x, T^{n-b} z) \leq A^b \, d(T^n x, T^n z).
$$
Note that $T^n x , T^n z \in \mathcal{N}_{\rho}(n)$ and so
$$
d(T^n x, T^n z) \leq d(T^n x, x) + d(x,z) + d(z, T^n z) \leq 4 A^n \rho + d(x,z).
$$
Further,
$$
d(x, z) \leq A^j \, d(T^{-j}x, T^{-j}z).
$$

\noindent Now, $T^{-j}x, T^{-j}z$ are both elements of ${\zeta}_{\tau'}$ and $s({\zeta}_{\tau'}) = l$. Thus
\begin{equation*}
s(T^{-j}x, T^{-j}z) \geq s({\zeta}_{\tau'}) = l.
\end{equation*}
We derive the lower bound on $l$ from $(n,j)$-minimality which yields
\begin{equation*}
n \leq n+j < R^{l+1} \leq (l+1)J^{1/4} \quad \Longrightarrow \quad l >nJ^{-1/4} - 1.
\end{equation*}
Then employing (A1)(b) and keeping in mind that $n > \mathfrak{b} J$
$$
d(T^{-j}x, T^{-j}z) \leq C_0 \, \alpha^{s(T^{-j}x, T^{-j}z)}
\leq (C_0 / \alpha) \, \alpha^{nJ^{-1/4}}
 \leq (C_0 / \alpha) \, \alpha^{\mathfrak{b} JJ^{-1/4}}
  \leq (C_0 / \alpha) \, \hat{\alpha}^{\abs{\log \rho}^{\, 3/4}},
$$
where $\hat{\alpha} = \alpha^{2^{-3/4} \, {\mathfrak{a}}^{3/4} \, \mathfrak{b}} < 1$.
Therefore
\begin{align*}
d(\hat{T}^l T^{-j} x, \hat{T}^l T^{-j} z) &= d(T^{n-b} x, T^{n-b} z) \\
&\leq A^b \, d(T^n x, T^n z) \\
& \leq A^b \, (4A^n \rho + d(x,z)) \\
&\leq 4 \, A^{n+b} \rho + A^{b+j} \, d(T^{-j}x, T^{-j}z) \\
& \leq 4 \, A^{n+b} \rho + A^{b+j} \, (C_0 / \alpha) \, \hat{\alpha}^{\abs{\log \rho}^{3/4}},
\end{align*}
and by Lemma~\ref{rate_diam} ($\hat{C}_1=C_0 / \alpha$) since $\mathfrak{b}J < n \leq J$ and
$b,j < R_{i_l} < J^{\frac{1}{4}}\le J$:
$$
d(\hat{T}^l T^{-j} x, \hat{T}^l T^{-j} z) \leq e^{- \abs{\log \rho}^{1/4}}.
$$
Taking the supremum over all points $x$ and $z$ yields
$$
\abs{\hat{T}^l (T^{-(n+j)}\mathcal{N}_{\rho}(n) \cap {\zeta}_{\tau'}\cap\gamma^u)} 
\leq e^{-\abs{\log \rho}^{1/4}}.
$$
By assumption (B3) on the relationship between the measure and the metric
$$
m_{\hat\gamma^u}(\hat{T}^l (T^{-(n+j)}\mathcal{N}_{\rho}(n) \cap {\zeta}_{\tau'})) \leq C_2e^{-{\varsigma'}\abs{\log \rho}^{1/4}}
$$
for $\varsigma'<\varsigma$, which implies by the product structure of $m$ that 
$$
m(\hat{T}^l (T^{-(n+j)}\mathcal{N}_{\rho}(n) \cap {\zeta}_{\tau'})) 
\leq c_3e^{-{\varsigma'}\abs{\log \rho}^{1/4}}
$$
Incorporating the estimate into~\eqref{level_summand} yields
$$
m(T^{-(n+j)}\mathcal{N}_{\rho}(n) \cap \tilde{\zeta}_{\tau'})
\leq c_4\, e^{-{\varsigma'}\abs{\log \rho}^{1/4}} m({\tilde\zeta}_{\tau'}),
$$
where $c_4\le \frac{2c_2C_3}{m(\Lambda)}$.
Substituting this into estimate~\eqref{level_set_N2}  we see that ($n\le J$)
$$
\mu(\mathcal{N}_{\rho}(n))\le c_4 \, e^{-{\varsigma'}\abs{\log \rho}^{1/4}}
 \sum_{i} \sum_{j=0}^{R_i-1} \hspace{-0.5cm} \sum_{\substack{\tau' | \exists \tau \in I \\ T^{-(n+j)}\mathcal{N}_{\rho}(n) \cap {\zeta}_{\tau'} \ne \varnothing}} \hspace{-0.8cm} m({\tilde\zeta}_{\tau'}) +c_2 J^{-\frac{\lambda-5}{4}}.
$$
Next we have to bound the triple sum on the RHS.
As we showed before all of the $\tilde{\zeta}_{\tau'}$ with $\tau' \, | \, \exists \tau \in I$ are disjoint and are all subsets of $\tilde{\Lambda}_i$, therefore
$$
\sum_{i} \sum_{j=0}^{R_i-1} \hspace{-0.4cm} \sum_{\substack{\tau' | \exists \tau \in I \\ T^{-(n+j)}\mathcal{N}_{\rho}(n) \cap {\zeta}_{\tau'} \ne \varnothing}} \hspace{-0.8cm} m({\tilde\zeta}_{\tau'})
\le
\sum_{i} \sum_{j=0}^{R_i-1} m(\tilde{\Lambda}_i) \, \leq \mu(M)=1
$$
Hence for $n=\roof{\mathfrak{b} J},\dots,J$ we obtain
$$
\mu(\mathcal{N}_{\rho}(n)) \leq c_4 \, e^{-{\varsigma'}\abs{\log \rho}^{1/4}} + c_2 J^{-\frac{\lambda-5}{4}}
\le c_5J^{-\frac{\lambda-5}4}
$$
for some $c_5$ and $\rho$ small enough, and consequently
\begin{equation}\label{Th2refPt2}
\mu(\mathcal{V}_\rho^2)  \leq \sum_{n=\roof{\mathfrak{b} J}}^{J} \mu(\mathcal{N}_{\rho}(n))
 \leq \sum_{n=\roof{\mathfrak{b} J}}^{J} c_5J^{-\frac{\lambda-5}4}
 \leq c_6 \, \abs{\log\rho}^{- \frac{\lambda-9}{4}}
\end{equation}
for some constant $c_6$ (and $\rho$ small enough) as $J=\lfloor\mathfrak{a}\abs{\log\rho}\rfloor$.

\vspace{3mm}

\noindent {\bf (II) Estimate of $\mathcal{V}_\rho^1$}

\vspace{1mm}

\noindent Here we consider the case $1 \leq n \leq \floor{\mathfrak{b}J}$. Following~\cite{CC13} we put
\begin{equation*}
s_p = 2^p \, \frac{A^{n \, 2^p}-1}{A^n-1}.
\end{equation*}
By~\cite{CC13} Lemma~B.3 one has $\mathcal{N}_{\rho}(n) \subset \mathcal{N}_{s_p \rho}(2^p n)$
for any $p \geq 1$, and in particular for $p(n) = \floor{\lg \mathfrak{b}J - \lg n} + 1$.
Therefore
$$
\bigcup_{n=1}^{\floor{\mathfrak{b}J}} \mathcal{N}_{\rho}(n) \subset \bigcup_{n=1}^{\floor{\mathfrak{b}J}} \mathcal{N}_{s_{p(n)} \rho}(2^{p(n)} n).
$$
Now define
\begin{equation*}
n' = n 2^{p(n)} \qquad \text{ and } \qquad \rho' = s_{p(n)} \rho.
\end{equation*}
A direct computation shows that $1 \leq n \leq \floor{\mathfrak{b}J}$ implies $\roof{\mathfrak{b}J} \leq n' \leq 2 \mathfrak{b}J$ and so
$$
\mathcal{V}_\rho^1 = \bigcup_{n=1}^{\floor{\mathfrak{b}J}} \mathcal{N}_{\rho}(n) \subset \bigcup_{n=1}^{\floor{\mathfrak{b}J}} \mathcal{N}_{s_{p(n)} \rho}(2^{p(n)} n) \subset \bigcup_{n'=\roof{\mathfrak{b}J}}^{2 \mathfrak{b}J} \mathcal{N}_{\rho'}(n').
$$
Therefore to estimate the measure of $\mathcal{V}_\rho^1$ it suffices to find a bound for
$\mathcal{N}_{\rho'}(n')$ when $n' \geq \mathfrak{b}J$. This is accomplished by using
an argument analogous to the first part of the proof. We replace all the $n$ with $n'$
 and $\rho$ with $\rho'$. The cutoff $J^{1/4} = (\floor{\mathfrak{a} \, \abs{\log \rho}})^{1/4}$
 remains unchanged. We get for $\mathfrak{b}< 1/3$
\begin{equation*}
\mu(\mathcal{N}_{\rho'}(n')) \leq c_4\, e^{-{\varsigma'}\abs{\log \rho}^{1/4}}  + c_2 n' J^{-\frac{\lambda-1}{4}}
\le c_7J^{-\frac{\lambda-5}4}
\end{equation*}
and thus obtain an estimate similar to~\eqref{Th2refPt2}:
$$
\mu(\mathcal{V}_\rho^1) \leq \sum_{n'=\roof{\mathfrak{b}J}}^{2 \mathfrak{b}J} \mu(\mathcal{N}_{\rho'}(n'))
 \leq \sum_{n'=\roof{\mathfrak{b}J}}^{J} c_7J^{-\frac{\lambda-5}4}
 \le c_8\abs{\log\rho}^{-\frac{\lambda-9}4}.
$$

\vspace{3mm}

\noindent {\bf (III) Final estimate}

\vspace{1mm}

\noindent Overall we obtain for all $\rho$ sufficiently small
$$
\mu(\mathcal{V}_\rho) \leq \mu(\mathcal{V}_\rho^1)+\mu(\mathcal{V}_\rho^2)
\le C_{10}\, \abs{\log \rho}^{- \frac{\lambda-9}4},
$$
where $C_{10}= c_6+c_8$.
 \end{proof}

\section{Recurrence under an Absolutely Continuous Measure}\label{SRBmeasure}
Here we consider measures that are absolutely continuous with respect to Lebesgue measure.

Let $d$ be the metric on $M$ and $T: M \maps M$  a $C^2$-diffeomorphism with attractor
 $\mathscr{A}$. We assume that the system can be modeled by a Young tower possessing
 a reference measure $\hat{m}$, and that the greatest common divisor of the return times
 $R_i$ is equal to one.
Let $\mu$ be the SRB measure on the attractor, that is $\mu_{\gamma^u}$ is absolutely continuous
with respect to Lebesgue $\ell$ on the unstable leaves $\gamma^u$
where its density function $f$ is regular and bounded.
$\mu_{\gamma^u}(F) = \int_{F} f \, d\ell$
for $F$ on an unstable leaf $\gamma^u$. By~\cite{LSY99}
 $\frac{1}{\mathfrak{c}} \leq f(\mathsf{x}) \leq \mathfrak{c}$  for a.e.
 $ \mathsf{x} \in \mathscr{A}$ and some $\mathfrak{c}>1$.
We require Assumptions (A1), (A2) and (A3) to be satisfied with $\lambda$ larger than $9$.
Note that the attractor $\mathscr{A}\subset M$ is given by
$$
\mathscr{A} = \bigcup_{i=1}^{\infty} \bigcup_{j=0}^{R_i-1} T^j \Lambda_i.
$$

\subsection{Regularity of the SRB measure $\mu$}

\noindent Let us note that by~\cite{CC13} Lemma~B.2 there exists a $\varsigma>0$ and a
set $\mathcal{U}'_\rho\subset M$ such that $\mu(\mathcal{U}'_\rho)\le c_1\Omega(\abs{\log\rho})$ for some
$c_1$ and so that $\mu(B_\rho(\mathsf{x}))\le\rho^\varsigma$ for all $\rho\in (0,1]$ and all
$\mathsf{x}\not\in\mathcal{U}_\rho'$. Hence $\mu$ is geometrically regular for $\varsigma$.

The following proposition shows that the SRB measure $\mu$ is $\xi$-regular for
any $\xi>\frac2u(D+1)-1$ where $u\ge1$ is the dimension of the unstable manifolds and $D$
is the dimension of $M$.

\begin{prop}\label{annulus}
Let $u$ be the dimension of the unstable leaves and let $w_0=\frac2u(D+1)-1$.
Then for every $a<\frac{\lambda-2}2$ and $w>w_0$ there exists a constant $C_{11}$ and a
set $\mathcal{U}''_\rho(w)\subset M$ satisfying $\mu(\mathcal{U}''_\rho)=\mathcal{O}(((w-w_0)\abs{\log\rho})^{-a})$
such that
$$
\mu(B_{\rho+\rho^w}(\mathsf{x})\setminus B_\rho(\mathsf{x}))
\le C_{11}\mu(B_\rho(\mathsf{x}))((w-w_0)\abs{\log\rho})^{-a}
$$
for every $\rho>0$ and for every $\mathsf{x}\not\in\mathcal{U}''_\rho$.
\end{prop}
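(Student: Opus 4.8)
The plan is to control the measure of the annulus $B_{\rho+\rho^w}(\mathsf{x})\setminus B_\rho(\mathsf{x})$ by comparing it to a union of annuli at a fixed geometric scale and exploiting the product structure of $\mu$. First I would fix a small scale $r=\rho^{w_0'}$ for some $w_0'$ slightly below $w_0$ (so that $\rho^w$ is much smaller than a thin tubular neighbourhood at scale $r$), and cover the attractor by a bounded-multiplicity family of boxes adapted to the local product structure (a piece of unstable leaf times a transversal in the stable direction), of diameter comparable to $r$. The density $f$ of $\mu_{\gamma^u}$ being bounded between $1/\mathfrak{c}$ and $\mathfrak{c}$, the measure $\mu$ restricted to each box is comparable to Lebesgue measure on the box, and the annulus $B_{\rho+\rho^w}\setminus B_\rho$, intersected with a box, is contained in a $\rho^w$-tube around a $(D-1)$-sphere, whose Lebesgue measure is $\mathcal{O}(\rho^w r^{D-2})$ per box while $\mu(B_\rho)\gtrsim$ (something like) $\rho^\varsigma$ on the complement of the previously-constructed bad set $\mathcal{U}'_\rho$.

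The core of the argument is then an averaging (Fubini/Markov) step: the total $\mu$-mass of points $\mathsf{x}$ for which the ratio $\mu(B_{\rho+\rho^w}(\mathsf{x})\setminus B_\rho(\mathsf{x}))/\mu(B_\rho(\mathsf{x}))$ exceeds $((w-w_0)\abs{\log\rho})^{-a}$ can be bounded using Lemma~\ref{tallTowersBad} (the $\ell_0,\ell_1$ covering lemma of~\cite{CC13}) with $\ell_0=\mu$, $\ell_1$ the measure assigning to a set its $\mu$-mass weighted by the annulus thickness, and $\omega=((w-w_0)\abs{\log\rho})^{-a}$. This reduces matters to showing $\ell_1(M)=\mathcal{O}(((w-w_0)\abs{\log\rho})^{-2a})$ or similar, i.e.\ to an integrated estimate $\int_M \mu(B_{\rho+\rho^w}(\mathsf{x})\setminus B_\rho(\mathsf{x}))\,d\mu(\mathsf{x})$ which, after switching the order of integration, becomes $\int_M \mu\{\mathsf{y}:\rho\le d(\mathsf{x},\mathsf{y})\le\rho+\rho^w\}\,d\mu(\mathsf{y})$; on each leaf this is $\mathcal{O}(\rho^w\rho^{\varsigma-1})$ by the density bound and the geometric-regularity estimate $\mu(B_\rho)\le\rho^\varsigma$, and one integrates over the transversal using $dm=dm_{\gamma^u}\,d\nu$. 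The exponent $w_0=\frac2u(D+1)-1$ and the loss $(w-w_0)$ in the denominator should emerge from optimising the scale $r$ against $\rho^w$: one writes $w=w_0+\epsilon$ and the competition between the $r^{D-2}$ loss from the number of boxes and the gain $r^{\cdots}$ from leaf dimension produces a factor that is polynomially small in $\epsilon\abs{\log\rho}$ precisely when $w>w_0$.

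The main obstacle I expect is the geometric bookkeeping in the product structure: a metric ball $B_\rho(\mathsf{x})$ is not a product set, and its intersection with the local unstable/stable foliations must be controlled using only the contraction estimates in Assumption~(A1)(b),(e) and the Hölder continuity of the foliations implicit in the rectangle structure; in particular one must bound how badly the stable holonomy $\Theta$ distorts the transversal measure $\nu$, which is where (A1)(d) and the product form $dm=dm_{\gamma^u}\,d\nu$ enter. A secondary technical point is ensuring the bad set $\mathcal{U}''_\rho$ depends on $w$ only through the stated $(w-w_0)^{-a}$ factor, which forces the scale $r$ to be chosen as a function of $w$ and requires the covering-lemma constant $p(D)$ to be genuinely dimensional and $w$-independent — this is exactly what Lemma~\ref{tallTowersBad} provides, so the argument should close.
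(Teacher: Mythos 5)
There is a genuine gap, and the approach is not the one the paper uses. The paper's proof does not run through an averaging/Markov step for the annulus itself; it gives a \emph{pointwise} bound on $\mu(\mathcal{A})$, where $\mathcal{A}=B_{\rho+\rho^w}(\mathsf{x})\setminus B_\rho(\mathsf{x})$, by expanding in the tower: decompose over the $\Lambda_i$ and then over cylinders $\tilde\zeta_{\tau'}$, push forward by $\hat T^l$ using the distortion lemma, and then use that (a) $T$ is $C^2$ so unstable-leaf segments are nearly flat, giving the diameter estimate $|\mathcal{A}\cap\tilde\zeta_{\tau'}\cap\gamma^u|\le c\,\rho^{(w+1)/2}$ (the worst, tangential, case), and (b) $\hat T^l$ is Lipschitz with constant $A^{sl}$. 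Choosing $s\approx k(w)|\log\rho|^{\tilde\eta}$, $l\approx|\log\rho|^{1-\tilde\eta}$ with $k(w)=\frac{w-w_0}{2\log A}$ makes $A^{sl}\rho^{(w+1)/2}\approx\rho^{(w_0+1)/2}=\rho^{(D+1)/u}$, and raising to the power $\varsigma'<u$ (close to $u$) yields $\rho^{D+1}\le\mu(B_\rho(\mathsf{x}))$ off the small set $\hat{\mathcal U}''_\rho$. The only place Lemma~\ref{tallTowersBad} enters is inside Lemma~\ref{2tallTowersEst}, to control the tall-beam error, where the relevant set function $\ell_1(S)=\sum_i\sum_j m(T^{-j}S\cap(\Lambda_i\setminus\tilde\Lambda_i))$ actually \emph{is} a fixed measure.

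Your plan applies the covering lemma directly to the annulus ratio, with ``$\ell_1$ the measure assigning to a set its $\mu$-mass weighted by the annulus thickness.'' That step does not go through: Lemma~\ref{tallTowersBad} requires $\ell_1$ to be a fixed Borel measure on $M$, and there is no fixed measure $\ell_1$ with $\ell_1(B_\rho(\mathsf{x}))\approx\mu(\mathcal{A}(\mathsf{x}))$, because the annulus is localized near the sphere $\partial B_\rho(\mathsf{x})$, whose position depends on $\mathsf{x}$. Relatedly, your Fubini step is a no-op: switching the order of integration in $\int_M\mu(\mathcal{A}(\mathsf{x}))\,d\mu(\mathsf{x})$ returns the same symmetric expression, so it does not reduce the problem to a per-leaf estimate. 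The per-leaf bound $\mathcal{O}(\rho^w\rho^{\varsigma-1})$ you assert is also the transversal estimate only; for leaves nearly tangent to $\partial B_\rho$ the intersection has diameter $\sim\rho^{(w+1)/2}$, which is the estimate the paper actually uses and which is what makes the exponent $w_0=\frac2u(D+1)-1$ appear. Finally, the claim that the $(w-w_0)$ loss ``emerges from optimising $r$ against $\rho^w$'' is not substantiated: in the paper this factor comes entirely from the choice of $s$, i.e.\ from trading the tower's Lipschitz expansion $A^{sl}$ against the decay $\Omega(s)$ of the tall-beam contribution, which your scheme has no analogue of. The ingredients you correctly identify — product structure, tangency, $p(D)$ being dimensional — are all in play, but without the cylinder decomposition and the $A^{sl}$ trade-off the argument does not close.
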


\begin{proof} Put $\mathcal{A}=B_{\rho+\rho^w}(\mathsf{x})\setminus B_\rho(\mathsf{x})$.
 For $s,l>0$ we let  as in Section~\ref{approx_ball} 
$\tilde\Lambda_i=\{x\in\Lambda_i: R(\hat{T}^k)\le s\;\forall k\le l\}$. Then as in~\eqref{R2 summand ctd1}
we use Lemma~\ref{2tallTowersEst} to obtain for $\mathsf{x}\not\in\mathcal{D}_{sl,s}$:
\begin{eqnarray*}
\mu(\mathcal{A})&\le&\sum_{i}\sum_{j=0}^{R_i-1}m(T^{-j}\mathcal{A}\cap\tilde\Lambda_i)
+2\sqrt{sl}\,\Omega(s)\mu(B_\rho)\\
&\le&\sum_{i}\sum_{j=0}^{R_i-1}\sum_{\tau\in I_{i,j,n}}m(T^{-j}\mathcal{A}\cap\tilde\zeta_\tau)
+2\sqrt{sl}\,\Omega(s)\mu(B_\rho)\\
&\le&\sum_{i}\sum_{j=0}^{R_i-1}\sum_{\substack{\tau' \, | \, \exists \, \tau \subset \tau' \\ \tau \in I\\
T^{-j}\mathcal{A} \, \cap \, {\zeta}_{\tau'} \ne \varnothing}}
\hspace{-0.4cm} m(T^{-j}\mathcal{A}\cap\tilde\zeta_{\tau'})
+2\sqrt{sl}\,\Omega(s)\mu(B_\rho)\\
&\le&C_3'\sum_{i}\sum_{j=0}^{R_i-1}\sum_{\substack{\tau' \, | \, \exists \, \tau \subset \tau' \\ \tau \in I\\
T^{-j}\mathcal{A} \, \cap \, {\zeta}_{\tau'} \ne \varnothing}}
\hspace{-0.4cm}
\int m_{\hat\gamma^u}(\hat{T}^l(T^{-j}\mathcal{A}\cap\tilde\zeta_{\tau'}))
m_{\gamma^u}(\tilde\zeta_{\tau'})\,d\nu(\gamma^u)
+2\sqrt{sl}\,\Omega(s)\mu(B_\rho),
\end{eqnarray*}
where $|\tilde\zeta_{\tau'}|\le\alpha^l$. Here we proceeded as in~\eqref{R2 summand ctd2}
 and put $\hat\gamma^u=\gamma^u(\hat{T}^lx)$ for 
some $x\in\zeta_{\tau'}\cap\gamma^u$. As before, the second term on the RHS estimates
the contributions from terms $m(T^{-j}\mathcal{A}\cap (\Lambda_i\setminus\tilde\Lambda_i))$
and from the tall beams where $R_i>s$. As $\hat{T}^l$ is one-to-one on $\tilde\zeta_{\tau'}$ we get
$$
m_{\gamma^u}(\hat{T}^l(T^{-j}\mathcal{A}\cap\tilde\zeta_{\tau'}))
\le m_{\gamma^u}(\hat{T}^lT^{-j}\mathcal{A}\cap\Lambda)
\le c_1(A^{sl}|\mathcal{A}\cap\tilde\zeta_{\tau'}|)^{\varsigma'}
$$
for $\varsigma'<u$,
 where $u$ is the dimension of the unstable manifolds and the diameter is measures inside
the unstable leaf. Here we used that $m$ on the unstable manifolds (of dimension $u$)
is absolutely continuous with respect to the Lebesgue measure and the fact that the map $T$
expands distances on $M$ by at most a factor $A$ at each iteration.
 Since the map $T$ is $C^2$ on $M$
we get that  $\tilde\zeta_{\tau'}\cap\gamma^u$ are on nearly flat segment of an unstable leaf $\gamma^u$,
which allows us to estimate the measure of the intersection
 $\mathcal{A}\cap\tilde\zeta_{\tau'}\cap\gamma^u$.
Since $\mathcal{A}$ is an annulus of thickness $\rho^w$ there exists a constant $c_2$ such that
$m_{\gamma^u}(\mathcal{A}\cap\tilde\zeta_{\tau'})\le c_2\rho^\frac{w+1}2$.
Since $\hat{T}^l\tilde\zeta_{\tau'}=\Lambda$ if
$\tilde\zeta_{\tau'}\not=\varnothing$ (see  Section~\ref{approx_ball}~(II)) we obtain
(for $\varsigma'<u$)
\begin{eqnarray*}
\mu(\mathcal{A})&\le& (A^{sl}c_2\rho^\frac{w+1}2)^{\varsigma'}
\sum_{i}\sum_{j=0}^{R_i-1}\sum_{\substack{\tau' \, | \, \exists \, \tau \subset \tau' \\ \tau \in I\\
T^{-j}\mathcal{A} \, \cap \, {\zeta}_{\tau'} \ne \varnothing}}
m(\tilde\zeta_{\tau'})
+2\sqrt{sl}\,\Omega(s)\mu(B_\rho)\\
&\le& c_3(A^{sl}\rho^\frac{w+1}2)^{\varsigma'}
(\rho+\rho^w+\alpha^\frac{n}s)^{\varsigma'}
+2\sqrt{sl}\,\Omega(s)\mu(B_\rho)
\end{eqnarray*}
Let $w_0=\frac2u(D+1)-1$ and $k(w)=\frac{w-w_0}{2\log A}$ for $w>w_0$.
Then put $s=\roof{k(w)\abs{\log\rho}^{\tilde\eta}}$ and $l=\floor{\abs{\log\rho}^{1-\tilde\eta}}$,
 where $\tilde\eta\in(0,1)$.
Thus $\frac12\abs{\log\rho}\le sl\le2\abs{\log\rho}$ and we obtain
$(A^{sl}\rho^\frac{w+1}2)^{\varsigma'}\le\rho^{D+1}$ provided $w>w_0$ and 
$\varsigma'<u$ close enough to $u$.
 We have
$\rho^{D+1}\le\mu(B_\rho(\mathsf{x}))$ for all $\mathsf{x}\not\in\hat{\mathcal{U}}''_\rho$
where $\hat{\mathcal{U}}''_\rho\subset M$ is a small set whose measure is by~\cite{CC13} Lemma~A.1
 bounded by $\mathcal{O}(\rho)$ for some constant $C_{11}$.
 This implies
\begin{align*}
\mu(\mathcal{A})
&\le c_5\mu(B_\rho)\left(\rho^{\varsigma'}+\alpha^{\varsigma' l}+\sqrt{sl}\;\Omega(s)\right)\\
&\le c_6\mu(B_\rho)\left(\rho^{\varsigma'}+\alpha^{\varsigma'\abs{\log\rho}^{1-\tilde\eta}}+
(k\abs{\log\rho})^{-\tilde\eta\frac{\lambda-1}2+\frac12}(\abs{\log\rho})^{\frac12(1-\tilde\eta)}\right)\\
&\le \frac{C_{11}}{(w-w_0)^a\abs{\log\rho}^a}\mu(B_\rho)
\end{align*}
for a constant $C_{11}$ and for all $\rho$ small enough. Since we can choose $\tilde\eta$ arbitrarily close to $1$,
we obtain any exponent $a<\frac{\lambda-2}2$. This applies for points $\mathsf{x}\not\in\mathcal{U}''_\rho$,
where the measure of the forbidden set $\mathcal{U}''_\rho=\hat{\mathcal{U}}''_\rho\cup\mathcal{D}_{sl,s}$
is bounded by $\mathcal{O}(1)((w-w_0)\abs{\log\rho})^{-a}$.
\end{proof}

\subsection{Proof of Theorem~3}
To prove this result we combine Theorems~1 and ~2. Let $J = \floor{\mathfrak{a} \, \abs{\log \rho}}$ as before, and take $\mathfrak{a} = [4 \log (\norm{DT}_{\mathscr{L}^\infty} + \norm{DT^{-1}}_{\mathscr{L}^\infty})]^{-1}$. Clearly the dimension $\varsigma$ of 
the measure $m_{\gamma^u}$ is equal to the dimension $u$.

In the previous sub-section we showed that $\mu$ is geometric regular and $\xi$-regular for any $\xi>\frac2u(D+1)-1$ provided one restricts to the set $M\setminus\mathcal{U}_\rho$, where $\mathcal{U}_\rho(w)=\mathcal{U}'_\rho\cup\mathcal{U}''_\rho(w)$ and
$\mu(\mathcal{U}_\rho)\le c_1((w-w_0)\abs{\log\rho})^{-a}$ for some $c_1$ and any $a<\frac{\lambda-2}2$.
Let $g(w)=C_{11}(w-w_0)^{-a}$, then $g(w)$ satisfies the summability condition in Assumption~(A4) for any $\beta>\frac1a>\frac2{\lambda-2}$ (i.e.\ we can always choose $\beta<\frac12$), and so ~(A4) is satisfied outside the set $\mathcal{E}_\rho=\bigcup_{n=J}^\infty\mathcal{U}_\rho(n^\beta)$ whose measure is bounded by $\mathcal{O}(\abs{\log\rho}^{-a})$. 
The  measure of the very short return set
\begin{equation*}
\mathcal{V}_\rho = \{\mathsf{x} \in \mathscr{A}: \ball(\mathsf{x}) \cap T^{n}\ball(\mathsf{x}) \ne \varnothing \text{ for some } 1 \leq n < J\}
\end{equation*}
has been estimated in Proposition~\ref{prop.short.returns}.

With the forbidden set $\mathcal{Z}_\rho = \mathcal{X}_\rho \cup \mathcal{V}_\rho\cup\mathcal{E}_\rho$
whose size is bounded by
$$
\mu(\mathcal{Z}_\rho) \leq C'' \, \abs{\log \rho}^{-\frac{\lambda-9}4}
$$
we get that for any point $\mathsf{x} \in \mathscr{A} \setminus \mathcal{Z}_\rho$  the function $S$ counting the number of visits to the ball
$\ball(\mathsf{x})$ can be approximated by a Poissonian, that is
$$
\Abs{\mathbb{P}(S=k) - e^{-t} \frac{t^k}{k!}} \; \leq \; C \, \abs{\log \rho}^ {- \kappa} \qquad \text{ for all } k \in \nats,
$$
for any $\kappa<\frac{\lambda-7}4$ by the remark following the proof of Theorem~1.
Now we set $\hat{C} = \max(C, C'')$.
\qed


\section{Poisson Approximation Theorem} \label{poisson}

This section contains the abstract Poisson approximation theorem which establishes the distance
between sums of $\{0,1\}$-valued dependent random variables $X_n$ and a random variable that
is Poisson distributed. It is used in Section~\ref{set_up_T1} in the proof of Theorem~1 and compares
 the number of occurrences in a finite time interval with the number of occurrences in the same interval
 for a Bernoulli process $\{\tilde{X}_n:n\}$.

\begin{thrm}\cite{CC13} \label{helperTheorem}
Let $(X_n)_{n \in \mathbb{N}}$ be a stationary $\{0,1\}$-valued process and $t$ a positive parameter.
 Let $S_a^b = \sum_{n=a}^b X_n$ and define $S:=S_1^N$ for convenience's sake where
 $N=\floor{t/\epsilon}$ and $\epsilon = \mathbb{P}(X_1 = 1)$. Additionally, let $\nu$ be the Poisson distribution measure with mean $t>0$. Finally, assume that $\epsilon < \frac{t}{2}$. Then there exists a constant $C_{12}$ such that for any $E \subset \nats$, and $2 \leq p < N$ we have
\begin{equation*}
\abs{\mathbb{P}(S \in E) - \nu(E)} \leq C_{12} \#\{E \cap [0,N]\} \; (N(\mathcal{R}_1 + \mathcal{R}_2) + p \epsilon)
\end{equation*}
where,
\begin{align*}
\mathcal{R}_1 &= \sup_{\substack {0 <j<N-p \\ 0<q<N-p-j}} \{ \abs{\mathbb{P}(X_1=1 \land S_{p+1}^{N-j}=q) - \epsilon \, \mathbb{P}(S_{p+1}^{N-j}=q)} \} \\
\mathcal{R}_2 &= \sum_{n=2}^p \mathbb{P}(X_1=1 \land X_n=1).
\end{align*}
\end{thrm}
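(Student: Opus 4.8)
The plan is to follow the Chen--Stein–type scheme of~\cite{CC13}: compare the law of $S=S_1^N$ first with the Binomial law of the same marginal, and only afterwards with the Poisson law $\nu$. I would introduce the weights $w_N(q)=\binom{N}{q}\epsilon^q(1-\epsilon)^{N-q}$ — equivalently the law of $\tilde S_1^N=\sum_{n=1}^N\tilde X_n$ for an i.i.d.\ Bernoulli$(\epsilon)$ process $\{\tilde X_n\}$ — and set up a one-step recursion in $N$. Conditioning on $X_1$ and using stationarity ($\mathbb{P}(S_2^N=q)=\mathbb{P}(S_1^{N-1}=q)$) gives the exact identity
$$
\mathbb{P}(S_1^N=q)=\mathbb{P}(S_1^{N-1}=q)-\mathbb{P}(X_1=1,\,S_2^N=q)+\mathbb{P}(X_1=1,\,S_2^N=q-1).
$$
The point is to replace $\mathbb{P}(X_1=1,\,S_2^N=q)$ by $\epsilon\,\mathbb{P}(S_1^{N-1}=q)$. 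Inserting a gap of length $p$ after the first coordinate costs at most $\mathcal{R}_2$, since $\{X_1=1,S_2^N=q\}$ and $\{X_1=1,S_{p+1}^N=q\}$ differ only on the event $\{X_1=1,\,\sum_{n=2}^pX_n\ge1\}$; decorrelating across the gap costs $\mathcal{R}_1$; and re-filling the gap costs at most $(p-1)\epsilon^2$. Hence $\mathbb{P}(X_1=1,S_2^N=q)=\epsilon\,\mathbb{P}(S_1^{N-1}=q)+\theta_N(q)$ with $\sup_q|\theta_N(q)|\le\mathcal{R}_1+\mathcal{R}_2+p\epsilon^2$.

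Next I would substitute this into the identity and subtract the analogous Binomial recursion $w_N(q)=(1-\epsilon)w_{N-1}(q)+\epsilon w_{N-1}(q-1)$. With $D_N(q)=\mathbb{P}(S_1^N=q)-w_N(q)$ this gives
$$
D_N(q)=(1-\epsilon)D_{N-1}(q)+\epsilon D_{N-1}(q-1)+\delta_N(q),\qquad \sup_q|\delta_N(q)|\le 2(\mathcal{R}_1+\mathcal{R}_2+p\epsilon^2)
$$
for $N>p$, while for $N\le p$ a direct estimate gives $\sup_q|D_N(q)|\le 2p\epsilon$ (both laws put mass $\le p\epsilon$ on $\{q\ge1\}$). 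Since the coefficients $1-\epsilon$ and $\epsilon$ sum to one, $M_N:=\sup_q|D_N(q)|$ satisfies $M_N\le M_{N-1}+2(\mathcal{R}_1+\mathcal{R}_2+p\epsilon^2)$; iterating down to level $p$ yields $\sup_q|\mathbb{P}(S=q)-w_N(q)|\le 2p\epsilon+2N(\mathcal{R}_1+\mathcal{R}_2)+2Np\epsilon^2$.

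Finally I would pass from Binomial$(N,\epsilon)$ to $\nu=\mathrm{Poisson}(t)$ via Le Cam's inequality, $\sum_q|w_N(q)-e^{-N\epsilon}(N\epsilon)^q/q!|\le N\epsilon^2$, together with $\|\mathrm{Poisson}(\lambda_1)-\mathrm{Poisson}(\lambda_2)\|_{\mathrm{TV}}\le|\lambda_1-\lambda_2|$ and the fact that $N=\lfloor t/\epsilon\rfloor$ forces $0\le t-N\epsilon<\epsilon$; this gives $\sum_q|w_N(q)-\nu(\{q\})|\le N\epsilon^2+\epsilon$. Since $N\epsilon\le t$, the surplus terms $N\epsilon^2$, $Np\epsilon^2$ and $\epsilon$ are all $\lesssim tp\epsilon$, so for every $q$ one obtains $|\mathbb{P}(S=q)-\nu(\{q\})|\le C_{12}\big(N(\mathcal{R}_1+\mathcal{R}_2)+p\epsilon\big)$; summing over $q\in E\cap[0,N]$ produces the factor $\#\{E\cap[0,N]\}$, while indices $q>N$ contribute nothing from the $\mathbb{P}(S=q)=0$ side and only the super-exponentially small Poisson tail $\mathbb{P}(\mathrm{Poisson}(t)>N)$ (recall $N\gtrsim t/\epsilon$), which is absorbed into the bound.

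The step I expect to be the main obstacle is the error accumulation in the recursion: a naive argument that sums the level-$N$ errors over all $q$ would generate a spurious additional factor of $N$. The way around it is to keep a \emph{uniform}-in-$q$ bound on the one-step error $\delta_N(q)$ — which is exactly what $\mathcal{R}_1$ and $\mathcal{R}_2$ supply once the length-$p$ gap is inserted — and to propagate the supremum $M_N$ rather than an $\ell^1$ norm, exploiting that $(1-\epsilon)+\epsilon=1$ so the recursion merely accumulates the errors additively instead of amplifying them. A secondary point needing attention is to match, at each level of the iteration, the shifted sums $S_{p+1}^{N-j}$ that appear with the supremum over $j$ built into the definition of $\mathcal{R}_1$, and to check that the small-$N$ base case and the tail contribution for $q>N$ are genuinely negligible.
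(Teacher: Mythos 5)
Your proof is correct and follows the same route as the paper's: first compare the law of $S$ to the i.i.d.\ Bernoulli/Binomial law $w_N$, then pass from Binomial to Poisson. The only difference is one of detail level — you derive the key pointwise bound $\sup_q|\mathbb{P}(S=q)-w_N(q)|\le 2p\epsilon+2N(\mathcal{R}_1+\mathcal{R}_2+p\epsilon^2)$ from scratch via the one-step recursion with gap insertion and decorrelation, whereas the paper simply invokes it as a black box from the proof of Theorem~2.1 of~\cite{CC13}, and you use Le~Cam's inequality where the paper quotes the Arratia--Goldstein--Gordon estimate~\cite{AGG89}; these are essentially the same ingredients.
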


\begin{proof} Let $(\tilde{X}_n)_{n \in \mathbb{N}}$ be a sequence of independent, identically distributed random variables taking values in $\{0,1\}$, constructed so that $\mathbb{P}(\tilde{X}_1=1)=\epsilon$. Further assume that the $\tilde{X}_n$'s are independent of the $X_n$'s. Let $\tilde{S}=\sum_{n=1}^N \tilde{X}_n$. Then
\begin{align*}
\abs{\mathbb{P}(S \in E) - \nu(E)}
& \leq \abs{\mathbb{P}(S \in E) - \mathbb{P}(\tilde{S} \in E)} + \abs{\mathbb{P}(\tilde{S} \in E) - \nu(E)} \\
& \leq \! \! \sum_{k \in E \cap [0,N]} \! \! \abs{\mathbb{P}(S=k)-\mathbb{P}(\tilde{S}=k)} + \sum_{k=0}^{\infty} \, \Abs{\mathbb{P}(\tilde{S}=k) - \frac{t^k}{k!} e^{-t}}
\end{align*}
Thanks to \cite{AGG89} we can bound the second sum using the estimate
\begin{equation} \label{GoldArratia}
\sum_{k=0}^{\infty} \Abs{\mathbb{P}(\tilde{S}=k) - \frac{t^k}{k!} e^{-t}} \leq \frac{2t^2}{N}.
\end{equation}
For summands of the remaining term we utilize the proof of Theorem 2.1 from \cite{CC13}
according to which for every $k \leq N$,
\begin{equation*}
\abs{\mathbb{P}(S=k)-\mathbb{P}(\tilde{S}=k)}
\leq 2N(\mathcal{R}_1+\mathcal{R}_2+p \epsilon^2) + 4p \epsilon.
\end{equation*}
As $N \leq t/\epsilon$ this becomes
\begin{equation} \label{EstSAndTilde}
\abs{\mathbb{P}(S=k)-\mathbb{P}(\tilde{S}=k)}\leq 6 \, t \, (N(\mathcal{R}_1+\mathcal{R}_2) + p \epsilon).
\end{equation}
Combining~\eqref{GoldArratia} and~\eqref{EstSAndTilde} yields
\begin{align*}
\abs{\mathbb{P}(S \in E) - \nu(E)} &\leq \sum_{k \in E \cap [0,N]} \hspace{-0.2cm} \abs{\mathbb{P}(S=k)-\mathbb{P}(\tilde{S}=k)} \, + \, \frac{2t^2}{N} \\
&\leq \sum_{k \in E \cap [0,N]} \hspace{-0.2cm} 6 \, t \, (N (\mathcal{R}_1 + \mathcal{R}_2) + p \epsilon) \, + \, \frac{2t^2}{t / \epsilon -1} \\
&\leq 6 \, t \, \#\{E \cap [0,N]\} \, (N(\mathcal{R}_1+\mathcal{R}_2) + p \epsilon) + 4 \, t\epsilon \\
&\leq C_{12} \#\{E \cap [0,N]\} \; (N(\mathcal{R}_1 + \mathcal{R}_2) + p \epsilon)
\end{align*}
for some $C_{12}<\infty$.
\end{proof}




\end{document}